\theoremstyle{plain}
\newtheorem{thm}{Theorem}[section]
\newtheorem*{thm*}{Theorem}
\newaliascnt{prop}{thm}
\newaliascnt{cor}{thm}
\newaliascnt{lem}{thm}
\newaliascnt{claim}{thm}
\newaliascnt{defn}{thm}
\newaliascnt{ques}{thm}
\newaliascnt{conj}{thm}
\newaliascnt{fact}{thm}
\newaliascnt{rem}{thm}
\newaliascnt{ex}{thm}
\newtheorem{prop}[prop]{Proposition}
\newtheorem{cor}[cor]{Corollary}
\newtheorem{lem}[lem]{Lemma}
\newtheorem*{prop*}{Proposition}
\newtheorem*{cor*}{Corollary}
\newtheorem*{lem*}{Lemma}
\newtheorem*{claim*}{Claim}
\theoremstyle{definition}
\newtheorem*{defn*}{Definition}
\newtheorem*{ques*}{Question}
\newtheorem*{conj*}{Conjecture}
\newtheorem*{prob*}{Problem}
\newtheorem{rem}[rem]{Remark}
\newtheorem{ex}[ex]{Example}
\newtheorem*{fact*}{Fact}
\newtheorem*{rem*}{Remark}
\newtheorem*{ex*}{Example}
\def\textsectionN~{\textsection{}}
\renewcommand\phi{\varphi}
\renewcommand\epsilon{\varepsilon}
\renewcommand\leq{\leqslant}
\renewcommand\geq{\geqslant}
\newcommand{\set}{%
  \@ifstar{\@setstar}{\@set}%
}%
\newcommand{\@setstar}[2]{\{\, #1 \mid #2 \,\}}
\newcommand{\@set}[1]{\{\, #1 \,\}}
\newcommand{\Set}{%
  \@ifstar{\SetR}{\@Set}%
}%
\newcommand{\SetR}[2]{\left\{\, #1 \;\left\vert\; #2 \,\right.\right\}}
\newcommand{\@Set}[1]{\left\{\, #1 \,\right\}}
\newcommand{\abs}[1]{\lvert #1 \rvert}
\newcommand{\ZZ}{\mathbb{Z}}
\DeclareMathOperator{\codim}{codim}
\DeclareMathOperator{\rk}{rk}
\DeclareMathOperator{\PGL}{PGL}
\DeclareMathOperator{\Hom}{Hom}
\DeclareMathOperator{\Sing}{Sing}
\DeclareMathOperator{\Cone}{Cone}
\newcommand{\pr}{\mathrm{pr}}
\newcommand{\sO}{\ensuremath{\mathscr{O}}}
\newcommand{\A}{\mathbb{A}}
\newcommand{\Gr}{\mathbb{G}}
\newcommand{\TT}{\mathbb{T}}
\newcommand{\PP}{{\mathbb{P}}}
\newcommand{\PN}{\PP^N}
\newcommand{\Pv}[1][N]{(\PP^{#1})\spcheck}
\newcommand{\textgene}[1]{\ \ \text{#1}\ \,}
\newcommand{\textand}{\textgene{and}}
\newcommand{\sP}{\mathscr{P}}
\newcommand{\HH}{\mathscr{H}}
\newcommand{\Hv}{\mathscr{H}^v}
\newcommand{\RNk}{1 \leq k \leq r}
\newcommand{\prP}{\pr_2}
\newcommand{\prH}{\pr_1}
\title[Cohomological characterization of hyperquadrics]{Cohomological characterization of hyperquadrics
  of odd dimensions  in characteristic two}%
\email{katu@toki.waseda.jp}
\author[K.~Furukawa]{Katsuhisa~FURUKAWA}
\urladdr{\url{http://www.aoni.waseda.jp/katu/index.html}}
\address{
  Department of Mathematics,
  School of Fundamental Science and Engineering,
  Waseda~University,
  Ohkubo~3-4-1, Shinjuku, Tokyo, 169-8555, Japan
}
\subjclass[2000]{Primary 14F10; Secondary 14N05}
\keywords{strange variety, characterization of hyperquadrics}
\date{February 2, 2014}
\begin{document}

\maketitle

\begin{abstract}
  We consider characterizations of projective varieties
  in terms of their tangents.
  S.~Mori established the characterization of projective spaces in arbitrary characteristic
  by ampleness of tangent bundles.
  J.~Wahl characterized
  projective spaces in characteristic zero by cohomological condition of tangent bundles;
  in addition, he remarked that a counter-example in characteristic two
  is constructed from odd-dimensional hyperquadrics $Q_{2n-1}$ with $n > 1$.
  This is caused by
  existence of a common point in $\PP^{2n}$
  which every embedded tangent space to 
  the quadric contains.
  In general, a projective variety in $\PN$ is said to be {strange}
  if its embedded tangent spaces admit such a common point in $\PN$.
  A non-linear smooth projective curve
  is strange if and only
  if it is a conic in characteristic two (E.~Lluis, P.~Samuel).
  S.~Kleiman and R.~Piene showed that
  a non-linear smooth hypersurface in $\PN$
  is strange
  if and only if it is a quadric of odd-dimension in characteristic two.
  In this paper,
  we investigate complete intersections,
  and prove that,
  a non-linear smooth complete intersection in $\PN$ is strange
  if and only if it is a quadric in $\PN$ of odd dimension in characteristic two;
  these conditions are also equivalent to
  non-vanishing of $0$-cohomology of $(-1)$-twist of the tangent bundle.
\end{abstract}

\section{Introduction}

S.~Mori \cite{Mori} established the characterization of projective spaces in characteristic $p \geq 0$
by ampleness of tangent bundles.
The work has motivated approaches via tangential properties to characterizing several projective varieties.
J.~Wahl \cite{Wahl} characterized projective spaces in $p = 0$
by cohomological condition of tangent bundles;
in addition, he remarked that a counter-example in $p=2$ is constructed from odd-dimensional hyperquadrics
$Q_{2n-1}$ with $n > 1$ \cite[p.~316]{Wahl}. This is caused by
existence of a common point $v \in \PP^{2n}$
which every embedded tangent space to the quadric
contains (\autoref{thm:cohom-chara}).
In general, a projective variety $X \subset \PN$ is said to be \emph{strange}
if its embedded tangent spaces admit such a common point $v \in \PN$
(see \autoref{sec:prel-defin-polyn}, for more details).

It is classically known that
a non-linear smooth projective curve $X \subset \PN$
is strange if and only if $X$ is a conic in $\PN$ in $p=2$
(E.~Lluis \cite{Lluis}, P.~Samuel \cite{Samuel}; see also \cite[IV, Theorem~3.9]{hartshorne}).
In higher-dimensions, 
S.~Kleiman and R.~Piene {\cite[Theorem~7]{KP}} focused on hypersurfaces, and showed that
a non-linear smooth hypersurface $X \subset \PN$ is strange if and only if
$X$ is a quadric in $\PN$ of odd dimension in $p=2$.

In this paper, we investigate whether a smooth complete intersection variety
other than quadrics can be strange,
and answer it negatively by examining
a parameter space of strange complete intersections.
In consequence, we have:

\begin{thm}\label{thm:sing-str}
  Let $X$ be a smooth projective variety which is
  a non-linear complete intersection in $\PN$, and let
  $\sO_X(1) := \sO_{\PN}(1)|_X$.
  Then the following are equivalent:
  \begin{enumerate}
  \item $X$ is strange in $\PN$,
  \item $X$ is a quadric in $\PN$ of odd dimension in $p=2$.
  \end{enumerate}
  Moreover, in the case where $\dim(X) \geq 2$, the conditions \textnormal{(a)} and \textnormal{(b)} are equivalent to
  \begin{enumerate}\setcounter{enumi}{2}
  \item $H^0(X, T_X(-1)) \neq 0$.
  \end{enumerate}

\end{thm}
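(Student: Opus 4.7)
The plan is to prove the three-way equivalence with the main effort on (a) $\Rightarrow$ (b); the equivalence of (c) is then a Euler-sequence calculation.

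For (b) $\Rightarrow$ (a) and (b) $\Rightarrow$ (c): on a smooth odd-dimensional quadric $Q = V(q) \subset \PP^{2n}$ in characteristic two, the symmetric bilinear form $B(x, y) = \sum_j (\partial q/\partial x_j)(x)\, y_j$ is alternating, and since alternating forms have even rank, its radical in $k^{2n+1}$ is non-trivial. Any generator $v$ of that radical satisfies $v \in T_x Q$ for every $x \in Q$, so $[v]$ is a strange point of $Q$, and the constant element $v \in k^{N+1}$ factors through $T_Q(-1)$ via the Euler sequence, supplying the non-zero section required for (c).

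For the equivalence (a) $\Leftrightarrow$ (c) under the hypothesis $\dim X \geq 2$, I would use the Euler sequence restricted to $X$,
$$0 \to \sO_X(-1) \to \sO_X^{\oplus(N+1)} \to T_{\PN}(-1)|_X \to 0,$$
together with the vanishing $H^1(X, \sO_X(-1)) = 0$ for complete intersections of dimension at least two (by the Koszul resolution and Serre vanishing on $\PN$). This gives $H^0(X, T_{\PN}(-1)|_X) \cong k^{N+1}$, and under the inclusion $T_X(-1) \hookrightarrow T_{\PN}(-1)|_X$ a non-zero global section of $T_X(-1)$ corresponds to a non-zero vector $v \in k^{N+1}$ whose image at each $x \in X$ lies in $T_x X$; this is exactly the condition that $[v]$ be a strange point.

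For the central implication (a) $\Rightarrow$ (b), I write $X = V(f_1, \ldots, f_r)$ with $d_i = \deg f_i$, $d_1 \leq \cdots \leq d_r$, and fix coordinates so the strange point is $v = [1:0:\cdots:0]$. The strange condition translates into $\partial f_i/\partial x_0$ vanishing on $X$ for every $i$, hence $\partial f_i/\partial x_0 \in (f_1, \ldots, f_r)$; a degree count sharpens this to $\partial f_i/\partial x_0 \in (f_j : d_j < d_i)$, and in particular $\partial f_i/\partial x_0 = 0$ identically whenever $d_i = d_1$. In characteristic zero, iterating from $i = 1$ upward and ``integrating back'' modulo the ideal lets one replace each $f_i$ by a representative not involving $x_0$, realizing $X$ as a cone with vertex $v \in X$; smoothness then forces $X$ to be a linear subspace, contradicting the hypothesis.

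The main obstacle is the characteristic $p > 0$ case, where $\ker(\partial/\partial x_0)$ on polynomials is $k[x_0^p, x_1, \ldots, x_N]$ and the $f_i$ may genuinely involve $x_0^p$. Here I would examine the parameter space of tuples $(v, f_1, \ldots, f_r)$ satisfying the relations $\partial_v f_i \in (f_j : d_j < d_i)$ and compare its dimension (via the projection to $\PN$) to the dimension of the family of smooth complete intersections of type $(d_1, \ldots, d_r)$. Combining the iterative structure of these relations with the Jacobian smoothness condition should pin the multidegree down to $r = 1$, $d_1 = 2$, and $p = 2$, with $\dim X$ odd forced by the rank parity of the alternating form appearing in (b); at that stage one may appeal to or re-derive the Kleiman--Piene hypersurface classification to conclude. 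Extracting this multidegree restriction from the polynomial identities under the smoothness constraint is the delicate step.
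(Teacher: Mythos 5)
Your treatment of (b)\,$\Rightarrow$\,(a), of the equivalence (a)\,$\Leftrightarrow$\,(c) via the Euler sequence and $H^1(X,\sO_X(-1))=0$, and of the characteristic-zero (and, implicitly, all-degrees-less-than-$p$) case by ``integrating away'' $z_0$ to exhibit $X$ as a cone with vertex $v\in X$, is sound and close in substance to the paper (which phrases (a)\,$\Leftrightarrow$\,(c) through the bundle of principal parts ${\sP^1_X}\spcheck$, and gets the cone statement in $p=0$ from separability of the projection from $v$). The problem is the central implication (a)\,$\Rightarrow$\,(b) when $p>0$ and some $e^k\geq p$: what you offer there is only a program (``compare its dimension \dots should pin the multidegree down \dots the delicate step''), and that program, as stated, does not work. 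Comparing the dimension of the locus of strange tuples with the dimension of the family of all smooth complete intersections proves nothing, because the strange locus is genuinely much smaller than the whole parameter space; what must be shown is that \emph{every} member of the strange locus $\Hv$ (for multidegrees other than a single quadric of odd dimension in $p=2$) defines a \emph{singular} scheme, and no dimension count against the full family can detect that.

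The missing ideas are exactly the content of the paper's \autoref{sec:space-compl-inters}. First, one normalizes generators so that $f^k_{z_0}=0$ for all $k$ (\autoref{thm:fk_z0=0}, valid in any characteristic via the truncated Taylor trick), which makes $\Hv$ a product of linear systems rather than the awkward locus ``$\partial_v f_i\in(f_j: d_j<d_i)$'' you propose. Second, one needs an existence statement: an explicit $(e^1,\dots,e^r)$-complete intersection $X_0$ defined by a member of $\Hv$ with an \emph{isolated} singular point $\alpha\neq v$ (\autoref{thm:exist-barX}); this is constructed by hand (e.g.\ $z_0^pz_1^{e-p}+z_2^e+\dots+z_N^e$ for hypersurfaces, and a perturbed cone for $r\geq 2$), and its role is to force the fiber dimension of the incidence projection to be zero along one component. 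Third, one works with the incidence scheme $I\subset\Hv\times\PN$ of pairs (strange tuple, singular point), shows via the determinantal codimension of the rank-drop locus of the truncated Jacobian $D'$, together with the Euler relation eliminating the last column (\autoref{thm:dimI'}, \autoref{thm:notin-zN0}), that the component $\Lambda$ through the $\PGL(\PN;v)$-orbit of $(X_0,\alpha)$ has $\dim\Lambda\geq\dim\Hv$; since $\alpha$ is isolated, $\dim\prH(\Lambda)=\dim\Lambda$, so $\prH(\Lambda)=\Hv$ and every strange complete intersection of that multidegree is singular (\autoref{thm:sing-str2}), which is what (a)\,$\Rightarrow$\,(b) for smooth $X$ requires. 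None of these three steps (normalization of $\Hv$, the isolated-singularity example, the incidence-scheme dimension count) appears in your sketch, and without them the positive-characteristic case--- which is the whole point of the theorem---remains unproved; appealing to Kleiman--Piene only covers $r=1$ and cannot substitute for the reduction you have not carried out.
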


This paper is organized as follows:
In \autoref{sec:prel-defin-polyn}, we study some basic properties of strangeness.
In \autoref{sec:section-cohomology-}, considering relation between strangeness and cohomology,
we show the equivalence ``(a) $ \Leftrightarrow $ (c)''
(for the exceptional case $\dim(X) = 1$, see \autoref{thm:rem-dim-1}).
In \autoref{sec:defin-polyn-strange}, we analyze defining polynomials of strange varieties.
We say that an $(N-r)$-dimensional projective variety in $\PN$ is an $(e^1, \dots, e^r)$-complete intersection
if it is scheme-theoretically equal to an intersection of $r$ hypersurfaces of degrees $e^1, \dots, e^r$,
where $e^1, \dots, e^r$ are $r$ positive integers. 
In \ref{eq:defHv} of \autoref{sec:space-compl-inters},
we define an irreducible parameter space $\Hv$
of $(e^1, \dots, e^r)$-complete intersections being strange for $v$.
In order to show the implication ``(a) $ \Rightarrow $ (b)'',
it is essential to consider
the case where all $e^k > 1$ and where $\Hv$ is \emph{not} equal to
the parameter space of quadrics of odd dimensions in $p=2$.
In \autoref{sec:strange-vari-with},
we construct
an $(e^1, \dots, e^r)$-complete intersection variety $X_0 \subset \PN$
having an isolated singular point $\alpha \neq v$.
In \autoref{sec:dimens-incid-vari},
we take
the incidence scheme $I \subset \Hv \times \PN$
parameterizing pairs of strange complete intersections and their singular points,
and in addition, take
an irreducible component $\Lambda \subset I$
whose subset parameterizes the orbit of the pair $(X_0, \alpha)$
under automorphisms of $\PN$ with fixed point $v$.
Calculating the dimension of $\Lambda$ and using the existence of $(X_0, \alpha)$, we show that the projection $\Lambda \rightarrow \Hv$ is surjective in the case.
This means that every $(e^1, \dots, e^r)$-complete intersection variety belonging to $\Hv$ is singular (\autoref{thm:sing-str2}), yielding \autoref{thm:sing-str}.

\section{Cohomology and defining polynomials}

\subsection{Preliminary}
\label{sec:prel-defin-polyn}

Let $X \subset \PN$ be a projective variety
over an algebraically closed field $K$ of characteristic $p \geq 0$.
We say that $X$ is \emph{strange for a point $v \in \PN$}
if $v \in \TT_xX$ for any smooth point $x \in X$,
where $\TT_xX \subset \PN$ is the embedded tangent space to $X$ at $x$.
We simply say that $X$ is \emph{strange} in $\PN$ if $X$ is strange for some point of $\PN$.

Let $(z_0: z_1: \dots: z_N)$ be the homogeneous coordinates on $\PN$.
We denote by $f_{z_j} := \partial f / \partial z_j$
for a homogeneous polynomial $f \in K[z_0, z_1, \dots, z_N]$.

\begin{prop}\label{thm:hypsurf-str}
  Let $X \subset \PN$ be a hypersurface defined by a homogeneous polynomial $f$,
  and let $v = (1:0:\dots:0) \in \PN$.
  Then $X$ is strange for $v$ if and only if $f_{z_0}$ is the zero polynomial.
\end{prop}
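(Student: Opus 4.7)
The approach is to convert the strangeness condition into pointwise vanishing of $f_{z_0}$ on $X$, and then upgrade this to vanishing of $f_{z_0}$ as a polynomial. First I would write down the equation of $\TT_x X$ explicitly: at a smooth point $x = (a_0:\dots:a_N) \in X$, the embedded tangent space is the hyperplane in $\PN$ cut out by $\sum_{j=0}^{N} f_{z_j}(a)\, z_j = 0$. Substituting the coordinates of $v = (1:0:\dots:0)$ shows that the condition $v \in \TT_x X$ is equivalent to $f_{z_0}(a) = 0$.

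The ``if'' direction is then immediate: when $f_{z_0}$ is the zero polynomial, the identity $f_{z_0}(a) = 0$ holds for every smooth point $a \in X$, and hence $v \in \TT_x X$ for all such $x$, giving strangeness of $X$ for $v$.

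For the converse, I would assume $X$ is strange for $v$. Then $f_{z_0}$ vanishes at every point of the smooth locus $X_{\mathrm{sm}}$. Since $X$ is an irreducible projective variety, $X_{\mathrm{sm}}$ is a dense open subset of $X$, so $f_{z_0}$ vanishes identically on $X$. Taking $f$ to be the (up to scalar) reduced irreducible defining polynomial, the Nullstellensatz yields $I(X) = (f)$, whence $f$ divides $f_{z_0}$. The degree comparison $\deg f_{z_0} = \deg f - 1 < \deg f$ then forces $f_{z_0} = 0$.

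The only step that requires any care is the appeal to $I(X) = (f)$: it rests on interpreting ``hypersurface defined by $f$'' to mean that $X$ is the reduced irreducible zero locus of an irreducible polynomial $f$. Since strangeness depends only on the underlying reduced scheme and its smooth locus, this interpretation causes no loss of generality, and the proof is otherwise a direct calculation.
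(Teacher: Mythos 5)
Your proof is correct and follows essentially the same route as the paper: compute $\TT_xX$ via the partials of $f$, observe that $v \in \TT_xX$ amounts to $f_{z_0}(x)=0$, deduce $f_{z_0} \in (f)$ from vanishing on $X$, and conclude $f_{z_0}=0$ by comparing degrees. Your extra remarks (density of the smooth locus, Nullstellensatz, reducedness of $f$) just make explicit what the paper leaves implicit.
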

\begin{proof}
  Let $a \in X$ be a smooth point, where recall that
  $\TT_aX = (f_{z_0}(a) z_0 + \dots + f_{z_N}(a) z_N = 0) \subset \PN$.
  Then $v \in \TT_aX$
  if and only if $f_{z_0}(a) = 0$.
  Hence $X$ is strange for $v$ if and only if $f_{z_0}|_X = 0$.
  Here, the latter condition $f_{z_0}|_X = 0$
  means that
  $f_{z_0}$ is contained in the ideal $(f) \subset K[z_0, \dots, z_N]$,
  and then we have $f_{z_0} = 0$ because of $\deg (f_{z_0}) < \deg(f)$.
\end{proof}

Note that, in the case where $X \subset \PN$ is a degenerate subvariety,
i.e., $X$ is contained in an $m$-dimensional linear subvariety $L$ of $\PN$ with $m < N$,
$X$ is strange in $\PN$ if and only if $X$ is strange in $L \simeq \PP^m$.

\begin{ex}\label{thm:quad}
  Let $X \subset \PN$ be a smooth quadric, i.e.,
  a smooth projective variety of degree $2$ in $\PN$.
  Then $X$ is strange if and only if $\dim(X)$ is odd and $p=2$.
  The reason is as follows:
  It is sufficient to consider the case where $X$ is non-degenerate;
  thus we set $N = \dim(X) + 1$.
  Let $f$ be the defining equation of $X$.
  Choosing suitable coordinates $(z_0: z_1: \dots: z_N)$ on $\PP^N$,
  we can assume that
  \[
  f = 
  \begin{cases}
    z_0^2 + z_1z_2 + z_3z_4 + \dots + z_{N-1}z_N & \text{if $\dim(X)$ is odd},

    \\
    z_0z_1 + z_2z_3 + \dots + z_{N-1}z_N & \text{if $\dim(X)$ is even}.
  \end{cases}
  \]
  
  \begin{inparaenum}
  \item 
    Assume that $\dim(X)$ is odd and $p=2$. Then ${f}_{z_0} = 2z_0 = 0$;
    hence it follows from \autoref{thm:hypsurf-str} that
    $X$ is strange for $v = (1:0:\dots:0)$.

  \item 
    Assume that $\dim(X)$ is even or $p\neq2$.
    Let us consider
    the Gauss map $\gamma: X \rightarrow \Pv$ sending $a \mapsto \TT_aX$,
    where $\Pv = \Gr(N-1, \PN)$ is the space of hyperplanes of $\PN$.
    Indeed, we can describe $\gamma$ by
    \[
    a \mapsto (f_{z_0}(a): \dots: f_{z_N}(a)).
    \]
    Then, by assumption, $\gamma$ is isomorphic with $\gamma(\sO(1)) = \sO(1)$,
    and $\gamma(X)$ is also a smooth quadric hypersurface in $\Pv$.
    We denote by $u^* \subset \Pv$ the set of hyperplanes containing a point $u \in \PN$.
    Then $u^*$ is a hyperplane of $\Pv$.
    If $X$ is strange for some $u \in \PN$, then we have
    $\gamma(X) \subset u^*$, a contradiction.
    Hence $X$ is not strange.
  \end{inparaenum}
\end{ex}

We say that a projective variety $X \subset \PN$ is a \emph{cone} with vertex $v \in \PN$
if the line $\overline{xv}$ is contained in $X$
for any $x \in X$.
If $X$ is a cone with vertex $v$, then $X$ is strange for $v$.

\begin{rem}\label{thm:cone-strange}
  In $p=0$, if $X$ is strange for $v$, then
  $X$ is a cone with vertex $v$.
  The reason is as follows:
  Let $\pi_v: \PN\setminus\set{v} \rightarrow \PP^{N-1}$
  be the linear projection from $v$,
  and let $d_x\pi_v: T_x\PN \rightarrow T_{\pi_v(x)}\PP^{N-1}$
  be the linear map between Zariski tangent spaces.
  For any smooth point $x \in X$ with $x \neq v$,
  since $T_x\overline{xv} \subset T_xX \cap \ker (d_x\pi_v)$, we have
  $\dim (d_x\pi_v(T_xX)) = \dim(X) -1$.
  It follows from $p=0$ that $\dim(\pi_v(X\setminus\set{v})) = \dim(X)-1$;
  hence $X$ is a cone with vertex $v$.
\end{rem}

\begin{rem}\label{thm:str-var-basic-prop}
  Let $X \subset \PN$ be a projective variety being strange for a point $v$.
  Then we immediately have the following properties:

  \begin{inparaenum}
  \item \label{thm:str-var-cutH}
    If $X$ is smooth and $L \subset \PN$ is a hyperplane not containing $v$, then $X \cap L$ is smooth.
    This is because,
    for each point $x \in X \cap L$, it follows from $v \in \TT_xX$ that $\TT_xX \not\subset L$;
    hence $X \cap L$ is smooth at $x$ and $\TT_x(X \cap L) = \TT_x(X) \cap L$.

  \item\label{thm:str-var-proj}
    Let $\pi_z: \PN \setminus \set{z} \rightarrow \PP^{N-1}$
    be a linear projection from a point $z \in \PN$ with $z \neq v$.
    Then the image $Y \subset \PP^{N-1}$ of $X$ under $\pi_z$ is strange for $\pi_z(v)$.
    This is because, $\TT_{\pi_z(x)} Y$ contains $\pi_z(\TT_xX)$ for a general point $x \in X$.

  \end{inparaenum}
\end{rem}

\subsection{Section of $0$-cohomology of $(-1)$-twist of a tangent bundle}
\label{sec:section-cohomology-}

Let $X \subset \PN$ be a smooth quasi-projective variety, and let $\sO_X(1) := \sO_{\PN}(1)|_X$.
We identify $\PN$ with
$(H^0(\PN, \sO(1))\spcheck \setminus {0})/ (K \setminus {0})$,
the projectivization of
the dual vector space of  $H^0(\PN, \sO(1))$.
Let
\[
\hat v \subset H^0(\PN, \sO(1))\spcheck
\]
be the one-dimensional vector subspace
corresponding to $v \in \PN$.
Considering the Euler sequence
$0 \rightarrow \sO_{\PN} \rightarrow H^0(\PN, \sO(1))\spcheck \otimes \sO_{\PN}(1) \xrightarrow{\xi} T_{\PN} \rightarrow 0$,
we can define a composite homomorphism $s_v$ of bundles on $\PN$ by
\[
s_v:
\hat v \otimes \sO_{\PN}(1) \hookrightarrow H^0(\PN, \sO(1))\spcheck \otimes \sO_{\PN}(1) \xrightarrow{\xi} T_{\PN}.
\]

\begin{prop}\label{thm:section-cohom}
  Let $X \subset \PN$ be a smooth quasi-projective variety.
  Then $X$ is strange for $v$ if  and only if
  $s_v|_X$ factors through $T_X \subset T_{\PN}|_X$.
  Hence, in this case, $s_v|_X$
  gives a nonzero section of $H^0(X, T_{X} (-1)) \simeq \Hom_{\sO_X}(\sO_X(1), T_X)$.
\end{prop}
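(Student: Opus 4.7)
The plan is to argue pointwise on $X$, since both the condition that $X$ be strange for $v$ and the condition that $s_v|_X$ factor through the subbundle $T_X \subset T_{\PN}|_X$ can be checked on fibers at each closed point of $X$; the latter reduction uses that $X$ is smooth, so that $T_X \hookrightarrow T_{\PN}|_X$ is a subbundle inclusion.

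First, I would give a concrete description of $s_v$ on the fiber at a point $x \in \PN$. Unwinding the Euler sequence, and identifying $\sO_{\PN}(1)|_x$ with $\hat x\spcheck$, the fiber of $\xi$ becomes, up to the canonical twist by $\hat x\spcheck$, the quotient $H^0(\PN, \sO(1))\spcheck \twoheadrightarrow H^0(\PN, \sO(1))\spcheck / \hat x$; accordingly $s_v(x)$ amounts to sending a generator of $\hat v$ to its image modulo $\hat x$. In particular $s_v(x) = 0$ precisely when $v = x$.

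Second, I would match this picture with the classical description of the embedded tangent space: $\TT_xX \subset \PN$ corresponds to the linear subspace $\widehat{\TT_xX} \subset H^0(\PN, \sO(1))\spcheck$ containing $\hat x$ whose projectivization is $\TT_xX$, and under the Euler identification one has $T_xX = \widehat{\TT_xX}/\hat x$, again up to the same canonical twist. Hence $v \in \TT_xX$ is equivalent to $\hat v \subset \widehat{\TT_xX}$, which in turn is equivalent to $s_v(x) \in T_xX$. Running this equivalence over every $x \in X$ then yields the stated characterization of strangeness.

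For the concluding ``nonzero section'' statement, the canonical isomorphism $\Hom_{\sO_X}(\hat v \otimes \sO_X(1), T_X) \simeq \Hom_{\sO_X}(\sO_X(1), T_X) \simeq H^0(X, T_X(-1))$, valid because $\hat v$ is one-dimensional, converts the factored $s_v|_X$ into a section of $T_X(-1)$; this section is nonzero because $s_v(x) \neq 0$ at every $x \in X$ with $x \neq v$, of which there is at least one unless $X$ reduces to the single point $v$. The main obstacle, more one of clarity than of depth, is keeping the various canonical identifications consistent---the Euler description of $T_x\PN$, the identification $\sO_{\PN}(1)|_x = \hat x\spcheck$, and the realization of $\TT_xX$ via a linear lift---so that the pointwise equivalence ``$v \in \TT_xX$ if and only if $s_v(x) \in T_xX$'' becomes manifest.
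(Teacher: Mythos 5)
Your argument is correct, and it reaches the conclusion by a more elementary route than the paper does. You work pointwise with the Euler sequence: you identify the fiber of $\xi$ at $x$ with the quotient map $H^0(\PN,\sO(1))\spcheck \rightarrow H^0(\PN,\sO(1))\spcheck/\hat x$ (up to the twist by $\hat x\spcheck$), invoke the classical identification $T_xX = \widehat{\TT_xX}/\hat x$, and conclude that $v \in \TT_xX$ is equivalent to $s_v(x) \in T_xX$ at every point; smoothness (so that $T_X \subset T_{\PN}|_X$ is a subbundle with locally free quotient $N_{X/\PN}$) and reducedness of $X$ then upgrade the fiberwise containment to a sheaf-level factorization, as you indicate. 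The paper instead routes everything through the dual of the bundle of principal parts ${\sP^1_X}\spcheck$ sitting inside $H^0(\PN,\sO(1))\spcheck\otimes\sO_X$ (diagram \ref{eq:prin-P-X}): \autoref{thm:prin-P-corres-TTX} verifies, via local defining polynomials, that the fiber of ${\sP^1_X}\spcheck$ is exactly the affine cone over $\TT_xX$ --- this is the same content you cite as the ``classical description,'' so your shortcut is legitimate but leaves that computation implicit --- and then \autoref{thm:vOx-Px} gives the factorization at once. What the paper's heavier setup buys is reuse: the same diagram yields \autoref{thm:cohom-prin-strange} ($X$ strange iff $H^0(X,{\sP^1_X}\spcheck)\neq 0$) and the surjectivity argument in \autoref{thm:ci-cohom-cond}, which your pointwise argument does not provide by itself. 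Your treatment of the final clause is fine, and your explicit exclusion of the degenerate case $X=\set{v}$ (where the section would be zero because $T_X=0$) is if anything more careful than the paper's wording.
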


To show this, we consider
$\sP^1_{X} = \sP^1_{X}(\sO_X(1))$, the bundle of principal parts
of $\sO_X(1)$ of first order,
which gives an exact sequence
\begin{equation*}
  0 \rightarrow
  \Omega_{X}^1 \rightarrow
  \sP_X^1 \otimes \sO_X(-1)\rightarrow
  \sO_X \rightarrow
  0
\end{equation*}
(see \cite[Remark~(6.4)]{Piene1976}).
Taking the dual of this, we have the following commutative diagram with exact rows and columns:
\begin{equation}\label{eq:prin-P-X}
  \begin{split}
    \xymatrix{%
      && 0 \ar[d] & 0 \ar[d]
      \\
      0 \ar[r]& \sO_{X} \ar[r] \ar@{=}[d]& {\sP^1_{X}}\spcheck \otimes \sO_X(1) \ar[r] \ar[d]& T_{X} \ar[r] \ar[d]& 0
      \\
      0 \ar[r]& \sO_{X} \ar[r] & H^0(\PN, \sO(1))\spcheck \otimes \sO_{X}(1) \ar[r]^{\hspace{4em}\xi|_X} \ar[d] & T_{\PN}|_{X} \ar[r] \ar[d]& 0
      \\
      && N_{X/\PN} \ar@{=}[r] \ar[d] & N_{X/\PN} \ar[d]
      \\
      && 0 & 0 \makebox[0pt]{\, .}
    }%
  \end{split}
\end{equation}

\begin{rem}\label{thm:prin-P-corres-TTX}
  For a smooth point $x \in X$, the projectivization of
  ${\sP^1_X}\spcheck \otimes k(x) \subset H^0(\PN, \sO(1))\spcheck$ corresponds to the embedded tangent space $\TT_xX \subset \PN$. The reason is as follows.
  Let $r = \codim(X, \PN)$, and
  let $f^1, \dots, f^r$ be $r$ homogeneous polynomials
  which locally define $X$ around
  the smooth point $x$.
  Then the linear map
  $H^0(\PN, \sO(1))\spcheck \rightarrow N_{X/\PN} \otimes k(x)$
  induced from the middle column of the above diagram \ref{eq:prin-P-X}
  is represented by the matrix $[f^k_{z_i}(x)]_{1 \leq k \leq r, 0 \leq i \leq N}$,
  and hence its kernel is the zero set of $r$ linear polynomials
  $\sum_i f^1_{z_i}(x) \cdot z_i, \dots, \sum_i f^r_{z_i}(x) \cdot z_i$.
  This implies the assertion.
\end{rem}

From \autoref{thm:prin-P-corres-TTX},
we find that $\hat v \subset {\sP^1_X}\spcheck \otimes k(x)$
if and only if $v \in \TT_xX$.
In particular, the following holds:
\begin{lem}\label{thm:vOx-Px}
  The subbundle $\hat v \otimes \sO_X \subset H^0(\PN, \sO(1)) \otimes \sO_X$ is contained in $ {\sP^1_X}\spcheck$
  if and only if    $X$ is strange for $v$.
\end{lem}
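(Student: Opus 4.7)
The plan is to reinterpret the claimed containment of subbundles as the vanishing of a morphism into the normal bundle, and then detect that vanishing pointwise via \autoref{thm:prin-P-corres-TTX}.

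First, I would extract from the middle column of diagram~\ref{eq:prin-P-X}, twisted down by $\sO_X(-1)$, the short exact sequence
\[
0 \to {\sP^1_X}\spcheck \to H^0(\PN, \sO(1))\spcheck \otimes \sO_X \xrightarrow{\pi} N_{X/\PN}(-1) \to 0
\]
on $X$ (noting that the stated ambient bundle $H^0(\PN,\sO(1))\otimes \sO_X$ in the lemma should carry the dual). The containment $\hat v \otimes \sO_X \subset {\sP^1_X}\spcheck$ inside this ambient bundle is then tautologically equivalent to the composite
\[
\phi : \hat v \otimes \sO_X \hookrightarrow H^0(\PN, \sO(1))\spcheck \otimes \sO_X \xrightarrow{\pi} N_{X/\PN}(-1)
\]
being the zero morphism.

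Next, since $\hat v \otimes \sO_X \simeq \sO_X$ and the target $N_{X/\PN}(-1)$ is locally free on the smooth (hence reduced) scheme $X$, the morphism $\phi$ is identified with a global section of $N_{X/\PN}(-1)$, and such a section vanishes if and only if its value $\phi \otimes k(x)$ in $N_{X/\PN}(-1)\otimes k(x)$ vanishes at every point $x \in X$. By \autoref{thm:prin-P-corres-TTX}, the fiberwise condition $\phi \otimes k(x) = 0$ at a (smooth) point $x$ is the same as $\hat v \subset {\sP^1_X}\spcheck \otimes k(x)$, which in turn is equivalent to $v \in \TT_xX$. Running this equivalence over all $x \in X$ gives exactly the definition of $X$ being strange for $v$.

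There is no serious obstacle here beyond assembling the inputs: the only point requiring any care is the passage from pointwise vanishing to vanishing as a sheaf morphism, which is standard once one knows that the source is a line bundle, the target is locally free, and $X$ is reduced. In particular, the hypothesis that $X$ is smooth (so that $N_{X/\PN}$, and hence $N_{X/\PN}(-1)$, is locally free and diagram~\ref{eq:prin-P-X} is available throughout $X$) is used exactly at this step.
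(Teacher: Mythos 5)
Your argument is correct and is essentially the paper's: the lemma is deduced there directly from \autoref{thm:prin-P-corres-TTX} via the fiberwise equivalence $\hat v \subset {\sP^1_X}\spcheck \otimes k(x) \Leftrightarrow v \in \TT_xX$, and your reformulation via vanishing of the induced section of $N_{X/\PN}(-1)$ merely makes explicit the standard pointwise-to-global step (using that $X$ is reduced and that the quotient in the middle column of \ref{eq:prin-P-X} is locally free, so the sequence stays exact on fibers). Your remark that the ambient bundle in the statement should be $H^0(\PN, \sO(1))\spcheck \otimes \sO_X$ is also correct.
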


\begin{proof}[Proof of \autoref{thm:section-cohom}]
  If $X$ is strange for $v$, then it follows from \autoref{thm:vOx-Px} that $s_v|_X$ is equal to the composite map,
  $\hat v \otimes \sO_X(1) \hookrightarrow
  {\sP^1_{X}}\spcheck \otimes \sO_X(1) \rightarrow T_{X}$.
  If $X$ is not strange for $v$, then $\hat v \not\subset {\sP^1_X}\spcheck \otimes k(x)$ for some $x$,
  and then the image of $s_v(x): \hat v \rightarrow T_x\PN$ is not contained in $T_xX$.
\end{proof}

\begin{rem}\label{thm:cohom-chara}
  If a smooth projective variety $X$ is strange and is not isomorphic to a projective space, then it follows from \autoref{thm:section-cohom} that
  $X$ gives a counter-example in $p > 0$ of the statement of Wahl's cohomological characterization of projective spaces.
  For example, smooth quadrics in $p=2$ whose dimensions are odd and $\geq 3$
  (\autoref{thm:quad}).
  (One dimensional quadrics, i.e., conics, are still isomorphic to $\PP^1$.)
\end{rem}

In addition, we can restate strangeness of tangents as a cohomological condition, as follows:
\begin{cor}\label{thm:cohom-prin-strange}
  Let $X \subset \PN$ be a smooth projective variety
  with $\sO_X(1) := \sO_{\PN}(1)|_{X}$, and
  denote by $\sP_X^1 = \sP_{X}^1(\sO_X(1))$ the bundle of principal parts of $\sO_X(1)$ of first order.
  Then $X$ is strange in $\PN$ if and only if
  $H^0(X, {\sP_X^1}\spcheck) \neq 0$.
\end{cor}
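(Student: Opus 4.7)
The plan is to combine \autoref{thm:vOx-Px} with the observation that, after tensoring the middle column of diagram \ref{eq:prin-P-X} by $\sO_X(-1)$, one obtains a subbundle inclusion
\[
\iota\colon {\sP^1_X}\spcheck \;\hookrightarrow\; H^0(\PN, \sO(1))\spcheck \otimes \sO_X,
\]
whose fibre at each smooth $x \in X$ realises, via \autoref{thm:prin-P-corres-TTX}, the cone over $\TT_xX$ inside $H^0(\PN, \sO(1))\spcheck$.

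For the direction ``$\Rightarrow$'', if $X$ is strange for some $v \in \PN$, then \autoref{thm:vOx-Px} yields $\hat v \otimes \sO_X \subset {\sP^1_X}\spcheck$, and picking a basis of the one-dimensional space $\hat v$ produces a nowhere-vanishing global section of ${\sP^1_X}\spcheck$, so $H^0(X, {\sP^1_X}\spcheck) \neq 0$.

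For the direction ``$\Leftarrow$'', I would take any nonzero $s \in H^0(X, {\sP^1_X}\spcheck)$ and push it forward along $\iota$. Since $X$ is connected and projective, $H^0(X, \sO_X) = K$, so
\[
H^0(X, H^0(\PN, \sO(1))\spcheck \otimes \sO_X) = H^0(\PN, \sO(1))\spcheck,
\]
and the section $\iota(s)$ corresponds to an element $\hat v \in H^0(\PN, \sO(1))\spcheck$. Injectivity of $\iota$ on global sections forces $\hat v \neq 0$, defining a point $v \in \PN$. By construction the composite $\sO_X \xrightarrow{s} {\sP^1_X}\spcheck \xrightarrow{\iota} H^0(\PN, \sO(1))\spcheck \otimes \sO_X$ is the constant section $1 \mapsto \hat v \otimes 1$, whose image is precisely the rank-one subbundle $\hat v \otimes \sO_X$; in particular $s$ is nowhere zero, and the image of $s$ inside ${\sP^1_X}\spcheck$ coincides with $\hat v \otimes \sO_X$. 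Hence $\hat v \otimes \sO_X \subset {\sP^1_X}\spcheck$, and \autoref{thm:vOx-Px} gives that $X$ is strange for $v$.

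The main conceptual point — really the only obstacle — is to recognise that a nonzero global section of ${\sP^1_X}\spcheck$, viewed inside the ambient trivial bundle, must be a nonzero constant element of $H^0(\PN, \sO(1))\spcheck$ and therefore singles out a unique candidate vertex $v$; once this is in hand, both implications reduce to direct applications of \autoref{thm:vOx-Px}.
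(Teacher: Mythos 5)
Your proposal is correct and follows essentially the same route as the paper: the ``only if'' direction is a direct application of \autoref{thm:vOx-Px}, and the ``if'' direction uses the (twisted) middle column of \ref{eq:prin-P-X} to view a nonzero section of ${\sP_X^1}\spcheck$ as a nonzero constant element of $H^0(\PN, \sO(1))\spcheck$, whose line $\hat v$ determines the strange point $v$. You merely spell out the details (the use of $H^0(X,\sO_X)=K$ and the identification of the image of $s$ with $\hat v \otimes \sO_X$) that the paper leaves implicit.
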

\begin{proof}
  The ``only if'' part follows immediately from \autoref{thm:vOx-Px}.
  To show the ``if'' part,
  we consider the case where there exists a nonzero section $s \in H^0(X, {\sP_X^1}\spcheck)$.
  By the middle column of \ref{eq:prin-P-X}, $s$ is regarded as a nonzero section of $H^0(\PN, \sO(1))\spcheck$,
  which gives a point $v \in \PN$ such that $K \cdot s = \hat v$.
  Then $X$ is strange for $v$.
\end{proof}

Now, let us consider the case where $X$ satisfies $H^1(X, \sO_X(-1)) = 0$.
\begin{prop}\label{thm:ci-cohom-cond}
  Let $X \subset \PN$ be a smooth projective variety,
  and assume that $H^1(X, \sO_X(-1)) = 0$.
  Then $X$ is strange in $\PN$ if and only if
  $H^0(X, T_X(-1)) \neq 0$.
\end{prop}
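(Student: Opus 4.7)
The plan is to deduce the proposition directly from \autoref{thm:section-cohom}, \autoref{thm:cohom-prin-strange}, and the top row of the diagram \ref{eq:prin-P-X}.

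For the ``only if'' direction, nothing new is needed: \autoref{thm:section-cohom} already produces, from the strangeness of $X$ for some $v \in \PN$, a nonzero section $s_v|_X \in \Hom_{\sO_X}(\sO_X(1), T_X) \simeq H^0(X, T_X(-1))$. Thus strangeness forces $H^0(X, T_X(-1)) \neq 0$ without any cohomological vanishing hypothesis.

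For the ``if'' direction, I would exploit the top row of \ref{eq:prin-P-X}, namely the short exact sequence
\[
0 \longrightarrow \sO_X \longrightarrow {\sP^1_X}\spcheck \otimes \sO_X(1) \longrightarrow T_X \longrightarrow 0.
\]
Twisting by $\sO_X(-1)$ and taking the associated long exact sequence in cohomology yields
\[
H^0(X, {\sP^1_X}\spcheck) \longrightarrow H^0(X, T_X(-1)) \longrightarrow H^1(X, \sO_X(-1)).
\]
Under the hypothesis $H^1(X, \sO_X(-1)) = 0$, the first map is surjective. Hence if $H^0(X, T_X(-1)) \neq 0$, then $H^0(X, {\sP^1_X}\spcheck) \neq 0$, and \autoref{thm:cohom-prin-strange} immediately concludes that $X$ is strange in $\PN$.

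There is essentially no obstacle: once the cohomology sequence is written down, everything assembled in the preceding subsection does the work. The only point worth double-checking is that twisting the middle term ${\sP^1_X}\spcheck \otimes \sO_X(1)$ by $\sO_X(-1)$ produces exactly ${\sP^1_X}\spcheck$, so that the surjection on $H^0$ matches the hypothesis of \autoref{thm:cohom-prin-strange}; this is immediate from the definitions.
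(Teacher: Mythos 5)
Your proposal is correct and follows essentially the same route as the paper: the ``only if'' direction is exactly \autoref{thm:section-cohom}, and the ``if'' direction uses the first row of \ref{eq:prin-P-X} twisted by $\sO_X(-1)$ together with $H^1(X,\sO_X(-1))=0$ to get surjectivity of $H^0(X, {\sP_X^1}\spcheck) \rightarrow H^0(X, T_X(-1))$, then invokes \autoref{thm:cohom-prin-strange}. No discrepancies to report.
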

\begin{proof}
  The ``only if'' part was shown in \autoref{thm:section-cohom}.
  Let us consider the ``if'' part.
  By the first row of \ref{eq:prin-P-X},
  it follows from $H^1(X, \sO_X(-1)) = 0$ that
  \[
  H^0(X, {\sP_X^1}\spcheck) \rightarrow H^0(X, T_X(-1))
  \]
  is surjective.
  Hence the assertion follows from
  \autoref{thm:cohom-prin-strange}.
\end{proof}

\begin{lem}\label{thm:ci-O--1}
  Let $X = X^1 \cap \dots \cap X^r \subset \PN$ be a complete intersection of
  $r$ hypersurfaces $X^1, \dots, X^r$ of degrees $e^1, \dots, e^r$.
  Then $H^j(X, \sO_X(i)) = 0$ for $0 < j < N-r$ and all $i \in \ZZ$.
\end{lem}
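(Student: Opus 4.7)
The plan is to argue by induction on the codimension $r$. For the base case $r = 0$ we have $X = \PN$, and the vanishings $H^j(\PN, \sO(i)) = 0$ for $0 < j < N$ and all $i \in \ZZ$ are standard (via Bott's formula, or a direct \v{C}ech computation on the standard affine cover).

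For the inductive step, set $Y := X^1 \cap \dots \cap X^{r-1}$. The complete intersection hypothesis on $X$ says that the $r$ defining polynomials form a regular sequence, hence so do the first $r-1$ of them; therefore $Y$ is a complete intersection in $\PN$ of codimension $r-1$, and the induction hypothesis yields $H^j(Y, \sO_Y(k)) = 0$ for $0 < j < N - r + 1$ and every $k \in \ZZ$. Since $X = Y \cap X^r$, restricting the sequence $0 \to \sO_Y(-e^r) \to \sO_Y \to \sO_{Y \cap X^r} \to 0$ (tensored with $\sO_Y(i)$) gives the short exact sequence
\[
0 \longrightarrow \sO_Y(i - e^r) \longrightarrow \sO_Y(i) \longrightarrow \sO_X(i) \longrightarrow 0.
\]
Its long exact sequence in cohomology contains the piece
\[
H^j(Y, \sO_Y(i)) \longrightarrow H^j(X, \sO_X(i)) \longrightarrow H^{j+1}(Y, \sO_Y(i - e^r)).
\]
In the target range $0 < j < N - r$ one has both $0 < j < N - r + 1$ and $0 < j + 1 < N - r + 1$, so by induction the two outer groups vanish, forcing $H^j(X, \sO_X(i)) = 0$.

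There is no substantive obstacle; the only mild point that must be noted is that the partial intersection $Y$ is itself a complete intersection, so that the induction hypothesis is genuinely applicable to it. This is immediate from the regularity of the full defining sequence of $X$, and once it is observed the induction unrolls mechanically from the long exact sequence above.
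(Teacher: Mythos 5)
Your proof is correct and is essentially the paper's own argument: the paper also peels off one hypersurface at a time, using the short exact sequence $0 \to \sO_{Y}(-e^{k}) \to \sO_{Y} \to \sO_{Y \cap X^{k}} \to 0$ and the vanishing $H^j(\PN,\sO_{\PN}(i))=0$ for $0<j<N$ as the starting point; your version merely packages this as a formal induction on the codimension $r$. No differences worth noting.
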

\begin{proof}
  Since $H^j(\PN, \sO_{\PN}(i)) = 0$ for $0 < j < N$ and all $i$,
  it follows from the exact sequence
  $0 \rightarrow \sO_{\PN}(-e^1) \rightarrow \sO_{\PN} \rightarrow \sO_{X^1} \rightarrow 0$
  that $H^j(X^1, \sO_{X^1}(i)) = 0$ for $0 < j < N-1$ and all $i$.
  Similarly, 
  it follows from the exact sequence
  $0 \rightarrow \sO_{X^1}(-e^2) \rightarrow \sO_{X^1} \rightarrow \sO_{X^1 \cap X^2} \rightarrow 0$
  that $H^j (X^1 \cap X^2, \sO_{X^1 \cap X^2}(i)) = 0$ for $0 < j < N-2$ and all $i$.
  Inductively, we have
  $H^j (X, \sO_{X}(-i)) = 0$ for $0 < j < N-r$ and all $i$.
\end{proof}

\begin{rem}\label{thm:rem-dim-1}
  Let $X \subset \PN$ be a smooth complete intersection of dimension $\geq 2$.
  Then $X$ satisfies the statement of \autoref{thm:ci-cohom-cond}, because of \autoref{thm:ci-O--1}.
  Note that the case where $\dim (X) = 1$ is exceptional:
  Assume that $X$ is a smooth $(e^1, \dots, e^{N-1})$-complete intersection curve in $\PN$.
  Then $T_X = \sO_X(N+1-\sum_{1 \leq k \leq N-1} e^k)$, which implies that
  $H^0(X, T_X(-1)) \neq 0$ if and only if $X$ is a conic or line (in any $p \geq 0$).
  We recall that a smooth conic in $p \neq 2$ is not strange.
\end{rem}

\subsection{Defining polynomials of a strange complete intersection variety}
\label{sec:defin-polyn-strange}

Let $e^1, \dots, e^r$ be $r$ integers greater than $1$.
We recall that an $(N-r)$-dimensional projective variety $X \subset \PN$ is an $(e^1, \dots, e^r)$-complete intersection
if $X$ is scheme-theoretically equal to an intersection of $r$ hypersurfaces of degrees $e^1, \dots, e^r$,
i.e., a minimal set of generators of the defining homogeneous ideal $I_X \subset K[z_0, z_1, \dots, z_N]$ of $X$ consists of $r$ homogeneous polynomials of degrees $e^1, \dots, e^r$.
We generalize \autoref{thm:hypsurf-str}, as follows:

\begin{prop}\label{thm:fk_z0=0}
  Let $X \subset \PN$ be an $(e^1, \dots, e^r)$-complete intersection variety
  which is strange for a point $v = (1:0:\dots:0)$.
  Then $I_X$ is generated by $r$ homogeneous polynomials $f^1, \dots, f^r$ of degrees $e^1, \dots, e^r$
  such that $f^k_{z_0}$ is the zero polynomial for $\RNk$.
\end{prop}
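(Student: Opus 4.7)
The plan is to exploit the fact that strangeness for $v = (1:0:\dots:0)$ translates into stability of the ideal $I_X \subset R := K[z_0, \dots, z_N]$ under $\partial_{z_0} := \partial/\partial z_0$. Starting from any generators $g^1, \dots, g^r$ of $I_X$ with $\deg g^k = e^k$, the reasoning of \autoref{thm:hypsurf-str} shows that for each smooth point $x \in X$ the condition $v \in \TT_xX$ is equivalent to $g^k_{z_0}(x) = 0$. By strangeness together with density of the smooth locus in $X$, each $g^k_{z_0}$ vanishes on $X$ and therefore lies in $I_X$; the Leibniz rule then upgrades this to $\partial_{z_0}(I_X) \subseteq I_X$.

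The core step is to show that $I_X$ is generated by elements of the subring $R_0 := \ker(\partial_{z_0}) \subset R$, i.e.\ $R_0 = K[z_1, \dots, z_N]$ when $p = 0$ and $R_0 = K[z_0^p, z_1, \dots, z_N]$ when $p > 0$. In both settings $R$ is a free $R_0$-module with basis given by monomials $z_0^b$, where $b$ runs over $\ZZ_{\geq 0}$ in characteristic zero and over $\{0, 1, \dots, p-1\}$ in positive characteristic. Any homogeneous $f \in I_X$ has a unique expansion $f = \sum_b z_0^b \tilde F_b$ with $\tilde F_b \in R_0$, and I claim each $\tilde F_b$ belongs to $I_X \cap R_0$. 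In characteristic zero, setting $d := \deg_{z_0}(f)$ gives $\partial_{z_0}^d f = d!\, \tilde F_d \in I_X$, hence $\tilde F_d \in I_X \cap R_0$; subtracting $z_0^d \tilde F_d$ from $f$ lowers the $z_0$-degree and induction finishes. In characteristic $p$, applying $\partial_{z_0}^{p-1}$ to $f$ collapses every contribution except the top one and yields $(p-1)!\, \tilde F_{p-1} \in I_X$; since $(p-1)!$ is a unit of $K$ by Wilson's theorem, this places $\tilde F_{p-1}$ into $I_X \cap R_0$, and iterating the same argument on $f - z_0^{p-1} \tilde F_{p-1}$ produces $\tilde F_b \in I_X \cap R_0$ for every $b$. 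Consequently $I_X = (I_X \cap R_0) \cdot R$.

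The proof concludes by a graded Nakayama argument. The natural map $I_X \cap R_0 \twoheadrightarrow I_X / \mathfrak{m} I_X$, with $\mathfrak{m} := (z_0, \dots, z_N)$, is $K$-linear and respects the grading; for an $(e^1, \dots, e^r)$-complete intersection the graded piece $(I_X / \mathfrak{m} I_X)_d$ has dimension $\#\{k : e^k = d\}$ (via the Koszul resolution), so lifting a homogeneous $K$-basis back to $I_X \cap R_0$ yields $r$ elements $f^1, \dots, f^r$ of degrees $e^1, \dots, e^r$ that minimally generate $I_X$ and satisfy $f^k_{z_0} = 0$ by construction.

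The main obstacle lies in the positive-characteristic part of the core step: a naive antiderivative approach at the level of a relation $g^k_{z_0} = \sum_{j < k} h_j f^j$ breaks because $\partial_{z_0}$ is not surjective on $R$ when $p > 0$. The iterated-differentiation trick circumvents this by extracting the components $\tilde F_b$ one at a time starting from $b = p-1$, where Wilson's theorem makes the relevant factorial invertible.
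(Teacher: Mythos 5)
Your argument is correct, and while it runs on the same engine as the paper's proof, it concludes by a genuinely different mechanism. The shared core is the observation that strangeness for $v=(1:0:\dots:0)$ forces $\partial_{z_0} I_X \subseteq I_X$, combined with $\partial_{z_0}^{\,p}=0$ in characteristic $p$. The paper's \autoref{thm:fk_z0=0-sub} packages this into the single formula $\tilde g = g + \sum_{j=1}^{p-1}\tfrac{(-1)^j z_0^j}{j!}\,\partial^j g/\partial z_0^j$, which is exactly the projection of $g$ onto the $b=0$ component of your decomposition $R=\bigoplus_{0\leq b\leq p-1} z_0^b R_0$; the paper then keeps the original $r$ generators, replaces each $f^k$ by its projection $\tilde f^k$ (same degree, with $(f^k,z_0)=(\tilde f^k,z_0)$), and argues geometrically that the common zero locus $\tilde X$ of the $\tilde f^k$ equals $X$, via a dimension count on $\tilde X\cap(z_0=0)$ (using $X\not\subset(z_0=0)$, again by strangeness) and a degree comparison. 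You instead prove the stronger algebraic statement that \emph{every} $R_0$-component $\tilde F_b$ of every homogeneous element of $I_X$ lies in $I_X$, extracting from the top with $\partial^{p-1},\partial^{p-2},\dots$ (Wilson's theorem is overkill: the factorials are units simply as products of nonzero residues), so that $I_X=(I_X\cap R_0)\,R$, and then recover $r$ generators of degrees $e^1,\dots,e^r$ by graded Nakayama. Your route buys a cleaner endgame: no dimension/degree argument, no implicit appeal to the purity and reducedness of $\tilde X$, no need for the auxiliary condition $(f^k,z_0)=(\tilde f^k,z_0)$, and a uniform treatment of $p=0$ and $p>0$; the degree bookkeeping for $(I_X/\mathfrak{m}I_X)_d$ that you attribute to the Koszul resolution is in fact immediate from the paper's definition of an $(e^1,\dots,e^r)$-complete intersection together with graded Nakayama. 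The paper's route, in exchange, is explicit and generator-by-generator, staying closer to the geometry. One cosmetic point: for $r\geq 2$ the condition $v\in\TT_xX$ only \emph{implies} $g_{z_0}(x)=0$ for a single $g\in I_X$ (equivalence is a hypersurface phenomenon), but you use only the implication you need, so nothing breaks.
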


To prove the above statement, we first show the following lemma:

\begin{lem}\label{thm:fk_z0=0-sub}
  Let $X \subset \PN$ be a projective variety which is strange for $v = (1:0:\dots:0)$.
  For a homogeneous polynomial $g \in I_X$ of degree $e$,
  there exists a homogeneous polynomial $\tilde g \in I_X$ of degree $e$,
  such that $\tilde g_{z_0} = 0$
  and the two ideals $(g,z_0), (\tilde g,z_0) \subset K[z_0, z_1, \dots, z_N]$
  coincide.
\end{lem}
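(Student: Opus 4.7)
The plan is to leverage strangeness to show that $g_{z_0}\in I_X$ for every $g\in I_X$, and then to iteratively modify $g$ so as to eliminate the ``offending'' terms $z_0^{k}h_k$ with $k\geq 1$ and $p\nmid k$, without changing $g$ modulo $z_0$. The first ingredient I would establish is that strangeness of $X$ for $v=(1:0:\dots:0)$ forces $g_{z_0}\in I_X$ for every $g\in I_X$: at any smooth point $x\in X$, since $g$ vanishes on $X$, the linear form $\sum_i g_{z_i}(x)z_i$ vanishes on the embedded tangent space $\TT_xX$, and evaluating at $v$ yields $g_{z_0}(x)=0$; so $g_{z_0}$ vanishes on the (dense) smooth locus of $X$ and hence on all of $X$.

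Next I would write $g=\sum_{k=0}^{e}z_0^{k}h_k$ with $h_k\in K[z_1,\dots,z_N]$ homogeneous of degree $e-k$, and call an index $k$ \emph{bad} if $k\geq 1$, $p\nmid k$, and $h_k\neq 0$; note that $\tilde g_{z_0}=0$ is equivalent to $\tilde g$ having no bad index, because the summand $k z_0^{k-1}h_k$ of $\tilde g_{z_0}$ vanishes precisely when $p\mid k$ or $h_k=0$. If $g$ already has no bad index, take $\tilde g=g$; otherwise let $m$ be the largest bad index. Since $p\nmid m$, the scalar $m$ is invertible in $K$, and I would replace $g$ by
\[
  g':=g-\tfrac{1}{m}\,z_0\,g_{z_0}\;\in\;I_X.
\]
A direct coefficient comparison gives $g'=h_0+\sum_{k=1}^{e}\tfrac{m-k}{m}z_0^{k}h_k$, so the constant term $h_0$ is preserved, the coefficient of $z_0^{m}$ is killed, and for every $k>m$ either $p\mid k$ (so $\tfrac{m-k}{m}\equiv 1$ in $K$ and the coefficient is unchanged) or $h_k=0$ by maximality of $m$. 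Hence the largest bad index of $g'$ is strictly less than $m$, and iterating finitely many times yields a $\tilde g\in I_X$ with no bad index, whence $\tilde g_{z_0}=0$ and $\tilde g|_{z_0=0}=h_0=g|_{z_0=0}$; since $g-\tilde g\in(z_0)$, the ideals $(g,z_0)$ and $(\tilde g,z_0)$ coincide.

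The delicate point is to verify that the elimination does not reintroduce bad monomials above the current maximum $m$, and this is exactly what the characteristic-$p$ bookkeeping above guarantees: any $k>m$ appearing in $z_0\,g_{z_0}/m$ either has $p\mid k$ (so the factor $k$ in $g_{z_0}$ is zero in $K$) or has $h_k=0$ by the maximality of $m$. With the largest bad index strictly decreasing under the replacement, termination of the procedure is automatic.
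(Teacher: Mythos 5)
Your argument is correct. The first half is exactly the paper's: strangeness for $v=(1:0:\dots:0)$ forces $g_{z_0}\in I_X$ for every $g\in I_X$, because the differential form $\sum_i g_{z_i}(x)z_i$ vanishes on $\TT_xX\ni v$ at each smooth point $x$, so $g_{z_0}$ vanishes on the dense smooth locus and hence lies in $I_X$. Where you diverge is the correction step. The paper applies the first-order fact inductively to get $\partial^j g/\partial z_0^j\in I_X$ for all $j$, and then kills the $z_0$-derivative in one shot with the closed-form operator
\[
\tilde g \;=\; g+\sum_{j=1}^{p-1}\frac{(-1)^j z_0^j}{j!}\,\frac{\partial^j g}{\partial z_0^j},
\]
verifying $\tilde g_{z_0}=0$ by a telescoping computation that uses $\partial^p g/\partial z_0^p=0$ in characteristic $p$ (this is the standard operator from derivation-kernel algorithms, as the paper remarks). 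You instead decompose $g=\sum_k z_0^k h_k$ and greedily eliminate the largest ``bad'' exponent $m$ (with $p\nmid m$, $h_m\neq 0$) via $g\mapsto g-\tfrac1m z_0 g_{z_0}$, checking that the coefficient bookkeeping (in particular $\tfrac{m-k}{m}=1$ in $K$ when $p\mid k$) prevents reintroducing bad exponents above $m$, so induction on $m$ terminates. Each step stays in $I_X$, is homogeneous of degree $e$, and changes $g$ only by a multiple of $z_0$, giving $(g,z_0)=(\tilde g,z_0)$, as required. Both proofs rest on the same key lemma; yours buys a slightly more elementary argument that uses only the first derivative being in $I_X$ (repeatedly) rather than all higher derivatives, and it works verbatim in characteristic $0$ (where the paper's sum up to $p-1$ would have to be read as a sum over all orders), while the paper's buys a single explicit formula with no iteration. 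One cosmetic point common to both proofs: if $z_0$ divides $g$ the procedure can output $\tilde g=0$, which still satisfies the conclusion with the usual convention on the zero polynomial, so this is not a gap.
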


\begin{proof}
  For a homogeneous polynomial $g \in I_X$, we have $g_{z_0} \in I_X$. The reason is as follows:
  There is nothing to prove if $g$ is the zero polynomial.
  Let $g$ be nonzero,
  and denote by $Y := (g=0) \subset \PN$, the hypersurface defined by $g$.
  For any $a \in X$,
  it follows that $v \in \TT_aX \subset \TT_aY$;
  then, since $v = (1:0:\dots:0)$ and $\TT_aY = (g_{z_0}(a) z_0 + \dots + g_{z_N}(a) z_N = 0)$,
  we have $g_{z_0}(a) = 0$.
  Hence $g_{z_0} \in I_X$.

  Applying the above argument inductively,
  we have $\partial^j g/ \partial z_0^j \in I_X$ for any $j > 0$.
  Now let
  \[
  \tilde g := g + \sum_{j=1}^{p-1} \frac{(-1)^j z_0^j}{j!} \cdot \frac{\partial^j g}{\partial z_0^j},
  \]
  which is contained in $I_X$ and satisfies that $(g,z_0) = (\tilde g,z_0)$.
  In addition, we have $\tilde g_{z_0} = 0$, because of
  \begin{multline*}
    \frac{\partial}{\partial z_0}
    \left( \sum_{j=1}^{p-1} \frac{(-1)^j z_0^j}{j!} \cdot \frac{\partial^j g}{\partial z_0^j} \right)
    =
    \sum_{j=1}^{p-1} \frac{(-1)^j z_0^{j-1}}{(j-1)!} \cdot \frac{\partial^j g}{\partial z_0^j}
    +
    \sum_{j=1}^{p-1} \frac{(-1)^j z_0^j}{j!} \cdot \frac{\partial^{j+1} g}{\partial z_0^{j+1}}
    \\
    = - \frac{\partial g}{\partial z_0}
    + \frac{(-1)^{p-1} z_0^{p-1}}{(p-1)!} \cdot \frac{\partial^{p} g}{\partial z_0^{p}}
    = - \frac{\partial g}{\partial z_0},
  \end{multline*}
  where $\partial^{p} g/ \partial z_0^{p} = 0$ since $p$ is the characteristic of the ground field.
\end{proof}

\begin{rem}
  The operation making $\tilde g$ with $\tilde g_{z_0} = 0$
  has appeared in an algorithm of derivation kernel computation 
  (see
  \cite[p.~27]{Essen};
  for positive characteristic,
  see \cite{Okuda}).
\end{rem}

\begin{proof}[Proof of \autoref{thm:fk_z0=0}]
  We can assume $N-r \geq 1$.
  Let $I_X$ be generated by homogeneous polynomials $f^1, \dots, f^r$ of degrees $e^1, \dots, e^r$.
  From \autoref{thm:fk_z0=0-sub}, for each $\RNk$,
  we have a homogeneous polynomial $\tilde f^k$ of degree $e^k$
  satisfying that
  $\tilde f^k \in I_X$,
  $\tilde f^k_{z_0} = 0$, and $(f^k,z_0) = (\tilde f^k,z_0)$.
  We set $\tilde X \subset \PN$ to be the zero set of
  $\tilde f^1, \dots, \tilde f^r$. Then we have $X \subset \tilde X$
  and $X \cap (z_0 = 0) = \tilde X \cap (z_0 = 0)$ for the hyperplane $(z_0 = 0) \subset \PN$.

  Let $V_1, \dots, V_m$ be the irreducible components of $\tilde X$,
  where $V_i$ is of dimension $\geq N-r \geq 1$ for each $1 \leq i \leq m$.
  Since $X$ is strange for $v$ and $v \notin (z_0 = 0)$,
  we have $X \not\subset (z_0 = 0)$; in particular, $\dim (X \cap (z_0 = 0)) = N-r-1$.
  Since $X \cap (z_0 = 0) = \bigcup_i (V_i \cap (z_0 = 0))$,
  we have $\dim(V_i \cap (z_0 = 0)) = N-r-1$,
  and hence $\dim(V_i) = N-r$.
  In particular, $X$ coincides with some $V_i$.
  Since $\deg(X) = \deg(\tilde X)$, we have $X = \tilde X$.
  %
\end{proof}

\begin{cor}\label{thm:e_k--p}
  Let $X \subset \PN$ be an $(e^1, \dots, e^r)$-complete intersection variety.
  Assume that $X$ is strange for a point $v$ and assume that $e^k < p$
  for any $k$ with $\RNk$. 
  Then $X$ is a cone with vertex $v$.
\end{cor}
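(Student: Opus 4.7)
The plan is to reduce to the case $v=(1:0:\dots:0)$ by a change of coordinates, then apply \autoref{thm:fk_z0=0} to choose a system of generators $f^1,\dots,f^r$ of $I_X$ with $\deg(f^k)=e^k$ and $f^k_{z_0}=0$, and finally exploit the degree bound $e^k<p$ to show each $f^k$ does not involve the variable $z_0$ at all.

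Concretely, I would expand each generator as $f^k=\sum_{j=0}^{e^k} z_0^j\, g^k_j$, where $g^k_j\in K[z_1,\dots,z_N]$ is homogeneous of degree $e^k-j$. Differentiating and using $f^k_{z_0}=0$ gives
\[
0=f^k_{z_0}=\sum_{j=1}^{e^k} j\, z_0^{j-1}\, g^k_j,
\]
so that $j\, g^k_j=0$ in $K$ for every $1\leq j\leq e^k$. This is the step where the hypothesis $e^k<p$ enters: each such $j$ is then a unit in $K$, forcing $g^k_j=0$ for $1\leq j\leq e^k$. Hence $f^k=g^k_0\in K[z_1,\dots,z_N]$ depends only on $z_1,\dots,z_N$.

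Once every generator of $I_X$ is independent of $z_0$, it is immediate that $X$ is a cone with vertex $v$: given any point $a=(a_0:a_1:\dots:a_N)\in X$ and any other point $(s:t)\in\PP^1$ on the line through $v$ and $a$, the corresponding point $(s+t a_0:t a_1:\dots:t a_N)\in\PN$ satisfies $f^k(s+t a_0,t a_1,\dots,t a_N)=t^{e^k}f^k(0,a_1,\dots,a_N)=t^{e^k}f^k(a)=0$, so the entire line $\overline{va}$ lies in $X$.

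There is no serious obstacle in this argument; it is a direct corollary of \autoref{thm:fk_z0=0}. The only thing worth flagging is that the bound $e^k<p$ is indispensable and sharp in the sense used here: as soon as $e^k\geq p$, the terms $z_0^{jp}g^k_{jp}$ can sit inside $f^k$ undetected by $\partial/\partial z_0$, and the conclusion that $X$ is a cone fails in general—indeed the whole paper is about precisely this failure in the quadric case $e^k=p=2$.
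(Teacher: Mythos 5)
Your proof is correct and follows essentially the same route as the paper: reduce to $v=(1:0:\dots:0)$, invoke \autoref{thm:fk_z0=0} to get generators with $f^k_{z_0}=0$, and use $e^k<p$ to conclude $f^k\in K[z_1,\dots,z_N]$, hence $X$ is a cone with vertex $v$. You merely spell out two steps the paper leaves implicit (the expansion $f^k=\sum_j z_0^j g^k_j$, where each vanishing $j\,g^k_j$ should be read as an identity in $K[z_1,\dots,z_N]$ rather than ``in $K$'', and the explicit verification that the lines through $v$ lie in $X$), which is fine.
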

\begin{proof}
  Changing coordinates, we can assume that $v = (1:0:\dots:0) \in \PN$.
  Then it follows from \autoref{thm:fk_z0=0} that
  $X$ is defined by homogeneous polynomials $f^1, \dots, f^r$ of degrees $e^1, \dots, e^r$ such that $f^k_{z_0} = 0$ for $\RNk$.
  If $f^k \notin K[z_1, \dots, z_N]$,
  then the inequality $e^k < p$ implies $f^k_{z_0} \neq 0$, a contradiction.
  Hence $f^k \in K[z_1, \dots, z_N]$ for any $k$, which means that
  $X$ is a cone with vertex $v$.
\end{proof}

\section{Parameter space of strange complete intersections}
\label{sec:space-compl-inters}

We fix some notation.
Let $\HH_e = \abs{\sO_{\PN}(e)}$ be the projectivization of $H^0(\PN, \sO(e))$,
whose general members parameterize hypersurfaces of $\PN$ of degree $e$.
For a point $v \in \PN$,
we denote by $\HH_e^v$ the subset of $\HH_e$
whose general members parameterize hypersurfaces being strange for $v$.
Changing homogeneous coordinates $(z_0: z_1: \dots: z_N)$ on $\PN$, we can assume
\[
v = (1: 0: \dots: 0).
\]
Then we have
$\HH_e^v = \set*{f \in \HH_e}{f_{z_0} = 0}$
due to \autoref{thm:hypsurf-str}.
In particular, $\HH_e^v$ is regarded as a linear subvariety of $\HH_e$,
since it is equal to the projectivization of the kernel of the linear map
$H^0(\PN, \sO(e)) \rightarrow H^0(\PN, \sO(e-1)): f \mapsto f_{z_0}$.

Let $e^1, \dots, e^r$ be $r$ integers greater than $1$.
We denote by
\begin{equation}\label{eq:defHH}
  \HH := \HH_{e^1} \times \dots \times \HH_{e^r},
\end{equation}
whose general members parameterize
$(N-r)$-dimensional $(e^1, \dots, e^r)$-complete intersections in $\PN$.
Let us consider
\begin{equation}\label{eq:defHv}
  \Hv := \HH_{e^1}^v \times \dots \times \HH_{e^r}^v,
\end{equation}
whose general members parameterize complete intersections
being strange for $v$. Indeed, we have:
\begin{equation}\label{eq:I-fz0-zero}
  \Hv = \set*{(f^1, \dots, f^r) \in \HH)}
  {f^1_{z_0} = \dots = f^r_{z_0} = 0}.
\end{equation}
Note that $\Hv$ is irreducible, since so is each $\Hv_e$.

\begin{rem}\label{thm:defpolyX-in-H}
  Let $X \subset \PN$ be an $(e^1, \dots, e^r)$-complete intersection variety
  being strange for $v$. Then, from \autoref{thm:fk_z0=0}, we can find
  homogeneous polynomials $(f^1, \dots, f^r) \in \Hv$
  whose zero set is equal to $X$.
\end{rem}

\subsection{Strange varieties with isolated singular points}
\label{sec:strange-vari-with}
We construct a member of $\Hv$ which defines a complete intersection variety $X_0$
having an isolated singular point $\alpha \neq v$.
Later, the pair $(X_0, \alpha)$ will play an essential role.

For a member $(f^1, \dots, f^r) \in \Hv$ and for $a \in \PN$, we set
\begin{equation}\label{eq:defn-Dfa}
  D ((f^k), a) = D ((f^k), a; \PN) :=
  \begin{bmatrix}
    f^1_{z_1} (a) & \cdots & f^1_{z_{N}} (a)
    \\
    \vdots && \vdots
    \\
    f^r_{z_1} (a) & \cdots & f^r_{z_{N}} (a)
  \end{bmatrix},
\end{equation}
where we need not consider $f^k_{z_0}$'s since these are zero as in \ref{eq:I-fz0-zero}.
The complete intersection variety $X \subset \PN$ defined by $(f^1, \dots, f^r)$
is singular at $a \in \PN$ if and only if
$f^1(a) = \dots = f^r(a) = 0$ and $\rk D((f^k), a) < r$.

\begin{prop}\label{thm:exist-barX}
  Assume that $\HH$ is \emph{not}
  equal to the parameter space of quadric hypersurfaces in $\PN$
  of odd dimensions in $p=2$.
  Assume that $p > 0$ and $e^k \geq p$ for some $k$.
  Then
  there exists a complete intersection variety $X_0 \subset \PN$ defined by a member of $\Hv$
  such that $\Sing X_0 \neq \set{v}$ and
  $0 < \#(\Sing X_0) < \infty$.
  In particular, there exists an isolated singular point $\alpha$ of $X_0$ such that
  $\alpha \neq v$.
\end{prop}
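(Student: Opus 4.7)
The plan is to construct $X_0 \in \Hv$ explicitly, exploiting the assumption that $e^k \geq p$ for some $k$ (after reindexing, $e^1 \geq p$). The key observation is that monomials of the form $z_0^p \cdot z_N^{e^1-p}$ or $z_0^{e^1}$ have vanishing $z_0$-derivative in characteristic $p$ and hence lie in $\HH_{e^1}^v$ by \autoref{thm:hypsurf-str}. The construction splits according to whether $p$ divides $e^1$.

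\textbf{Case A} ($p \mid e^1$). I set $f^1 = z_0^{e^1} + Q^1(z_1, \dots, z_{N-1})$ and $f^k = Q^k(z_1, \dots, z_{N-1})$ for $2 \leq k \leq r$, where $(Q^1, \dots, Q^r)$ is chosen so that the zero locus of the $Q^k$ in $\PP^{N-2}$ (in coordinates $z_1, \dots, z_{N-1}$) is a smooth complete intersection of dimension $N - r - 2$; this exists generically for $N \geq r + 2$. Since no $f^k$ involves $z_N$, the variety $X_0$ is a cone over its hyperplane slice $X_0' = X_0 \cap \{z_N = 0\} \subset \PP^{N-1}$ with vertex $\alpha := (0 : \dots : 0 : 1)$. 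A Jacobian computation shows $X_0'$ is smooth: the $z_0$-column of $D((f^k), a)$ from \ref{eq:defn-Dfa} vanishes identically, while the remaining columns assemble into a matrix of full rank $r$ on $X_0'$, thanks to smoothness of the auxiliary complete intersection. Hence $\Sing X_0 = \{\alpha\}$, and $\alpha \neq v$; moreover, $v \notin X_0$ since $f^1(v) = 1 \neq 0$.

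\textbf{Case B} ($p \nmid e^1$, hence $e^1 > p$). I replace the $z_0^{e^1}$ term by $z_0^p z_N^{e^1 - p}$, setting $f^1 = z_0^p z_N^{e^1 - p} + Q^1(z_1, \dots, z_{N-1})$ and $f^k, Q^k$ as in Case~A. Both $v$ and $\alpha$ now lie on $X_0$. A row-dependence relation $\sum_k \lambda_k \mathrm{row}_k = 0$ in $D((f^k), a)$ produces in its $z_N$-entry the equation $\lambda_1 (e^1 - p) z_0^p z_N^{e^1-p-1} = 0$. If $\lambda_1 = 0$, smoothness of the zero locus of $(Q^2, \dots, Q^r)$ in $\PP^{N-2}$ forces $(z_1, \dots, z_{N-1}) = 0$, placing $a$ on the line $L := \overline{v \alpha}$. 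If $\lambda_1 \neq 0$, then $z_0 = 0$ or $z_N = 0$; say $z_0 = 0$, so $f^1(a) = Q^1(z_1, \dots, z_{N-1}) = 0$ and $(z_1, \dots, z_{N-1})$ lies in the zero locus of all the $Q^k$, where the dependence of gradients contradicts smoothness unless again $(z_1, \dots, z_{N-1}) = 0$. On $L$, the equation $f^1 = z_0^p z_N^{e^1 - p} = 0$ has exactly the two solutions $v$ and $\alpha$, yielding $\Sing X_0 \subseteq \{v, \alpha\}$, a finite nonempty set containing $\alpha \neq v$.

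The principal obstacle is the rank analysis in Case~B, specifically ruling out singular points off $L$ that might come from accidental linear dependence of $\nabla Q^1$ against the span of $\nabla Q^2, \dots, \nabla Q^r$; genericity of the chosen smooth complete intersection handles this but needs careful bookkeeping. Low-dimensional edge cases---such as $N = r + 1$, where the auxiliary base in $\PP^{N-2}$ becomes $0$-dimensional, or $e^1 = p + 1$ in Case~B, where $v$ turns out to be a smooth point of $X_0$ so that $\Sing X_0 = \{\alpha\}$---require minor modifications but follow the same blueprint.
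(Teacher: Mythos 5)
Your Case B is essentially sound and is close in spirit to the paper's own construction (the paper likewise exploits that the $z_0$-derivative of $z_0^pz_1^{e^1-p}$ vanishes). The genuine gap is in Case A, which is the harder half. Since none of your polynomials $f^1 = z_0^{e^1}+Q^1(z_1,\dots,z_{N-1})$, $f^k = Q^k(z_1,\dots,z_{N-1})$ involves $z_N$, the variety $X_0$ is a cone with vertex $\alpha=(0:\dots:0:1)$, so $\Sing X_0$ is itself a cone over $\Sing X_0'$ with vertex $\alpha$; finiteness of $\Sing X_0$ therefore forces $X_0'$ to be \emph{smooth}. Your justification of that smoothness is where the argument fails: smoothness of $W=\{Q^1=\dots=Q^r=0\}\subset\PP^{N-2}$ controls the rank of $[Q^k_{z_j}]$ only at points of $X_0'$ lying over $W$, i.e.\ points with $z_0=0$. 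At a point $a\in X_0'$ with $a_0\neq 0$ one has $Q^1(a_1,\dots,a_{N-1})=-a_0^{e^1}\neq 0$, so the base point lies on $\{Q^2=\dots=Q^r=0\}$ but \emph{not} on $W$, and nothing prevents the gradient matrix from dropping rank there. In fact it must drop rank somewhere: $X_0'$ is an $(e^1,\dots,e^r)$-complete intersection in $\PP^{N-1}$ which is strange for $(1:0:\dots:0)$ (the $z_0$-partial of $z_0^{e^1}+Q^1$ vanishes because $p\mid e^1$), so already for $r=1$ the Kleiman--Piene theorem forces $X_0'$ to be singular once $e^1\geq 3$, and in general its smoothness is exactly the statement the whole paper is trying to disprove. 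Concretely, take $p=2$, $r=1$, $e^1=4$, $N=3$, $Q^1=c_0z_1^4+c_1z_1^3z_2+c_2z_1^2z_2^2+c_3z_1z_2^3+c_4z_2^4$: then $Q^1_{z_1}=z_2(c_1z_1^2+c_3z_2^2)$ and $Q^1_{z_2}=z_1(c_1z_1^2+c_3z_2^2)$ always share the zero $(\sqrt{c_3}:\sqrt{c_1})$, so the plane curve $X_0'$ is singular and the cone $X_0\subset\PP^3$ acquires a whole line of singular points, for every choice of $Q^1$.

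This is precisely why the paper avoids cones when $p\mid e$. For hypersurfaces it uses $f=z_Nz_{N-1}^{e-1}+\dots+z_2z_1^{e-1}+z_0^e$, which genuinely involves $z_N$ and has a single isolated singularity. For $r\geq 2$ it instead arranges that the hyperplane section $X_0\cap\{z_0=0\}$ is a \emph{smooth} complete intersection $Y$ (obtained by Bertini from a singular $Z=V(f^2,\dots,f^r)\subset\PP^{N-1}$ and a transversal hypersurface $V(f^1)$ missing $\Sing Z$); this forces $\Sing X_0$ to be disjoint from a hyperplane and hence finite, after which the perturbation $g^1=f^1-\frac{f^1(\beta)}{\beta_1^{e^1-p}}z_0^pz_1^{e^1-p}$ plants a singular point at $\alpha=(1:\beta_1:\dots:\beta_N)$ over a chosen $\beta\in\Sing Z$. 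Some device of this kind is needed to repair your Case A; the cone construction cannot be made to work there.
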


\begin{rem}[Hypersurfaces]\label{thm:exist-barX-hypsurf}
  Let us consider the case where $r=1$, i.e., $\HH$ is equal to $\HH_e$,
  the parameter space of hypersurfaces in $\PN$ of degree $e$.

  \begin{inparaenum}
  \item 
    Suppose that $\HH = \HH_2$ in $p = 2$, and suppose that $N$ is even (i.e., hypersurfaces are of odd dimensions).
    Then every member of $\Hv_2$ does \emph{not} satisfy
    the statement ``$\Sing X_0 \neq \set{v}$ and $0 < \#(\Sing X_0) < \infty$''.
    This is because, if a quadric $X \subset \PN$ is defined by a member of $\Hv$ and is singular at a point
    $\alpha \neq v$,
    then we have $\dim (\Sing X) \geq 1$, as follows:
    Let $\pi: \PN \setminus \set{\alpha} \rightarrow \PP^{N-1}$ be the linear projection from $\alpha$.
    Then $Y = \pi(X) \subset \PP^{N-1}$ is an $(N-2)$-dimensional quadric,
    and $X = \overline{\pi^{-1}(Y)}$ (it is a cone with vertex $\alpha$).
    Since $X$ is strange for $v$, $Y$ is strange for $\pi(v)$ as in \autoref{thm:str-var-basic-prop}\ref{thm:str-var-proj}.
    From \autoref{thm:quad}, $Y$ is not smooth. Hence, for $w \in \Sing(Y)$,
    the line $\overline{\pi^{-1}(w)}$ is contained in $\Sing(X)$.

  \item 
    Suppose that $\HH = \HH_2$ in $p=2$,
    and suppose that $N$ is odd.
    Then $\Hv_2$ satisfies the statement, as follows:
    We take $L \subset \PN$ to be a hyperplane containing $v$,
    and take $Y$ to be an $(N-2)$-dimensional smooth quadric in $L$ which is strange for $v$.
    For a point $\alpha \notin L$, we set
    $X_0 = \Cone_{\alpha}(Y)$, the cone over $Y$ with vertex $\alpha$.
    Then it follows that $\Sing(X_0) = \set{\alpha}$ with $\alpha \neq v$.

  \item
    For $e \geq 3$ and $e \geq p > 0$, we can construct a member of $\HH_e^v$
    satisfying the statement of \autoref{thm:exist-barX},
    as follows.
    Suppose that $p \mid e$. Then we set $X_0 \subset \PN$ to be the hypersurface defined by
    \[
    f = z_Nz_{N-1}^{e-1} + z_{N-1}z_{N-2}^{e-1} + \dots + z_2z_1^{e-1} + z_0^e.
    \]
    Then $X_0$ is strange for $v$ because of $f_{z_0} = 0$.
    On the other hand,
    $D(f, z) = \begin{bmatrix}
      f_{z_1}(z) & \dots & f_{z_N}(z)
    \end{bmatrix}
    $ is equal to
    \[
    \begin{bmatrix}
      (e-1)z_1^{e-2}z_2 & z_1^{e-1} + (e-1)z_2^{e-2}z_3& \dots
      & z_{N-2}^{e-1} + (e-1)z_Nz_{N-1}^{e-2} & z_{N-1}^{e-1}
    \end{bmatrix}.
    \]
    Then $\Sing (X_0) = \set{(0: \dots: 0: 1)}$ holds, as follows:
    We have ``$\supset$'' immediately. To show ``$\subset$'', we take $z \in \Sing(X_0)$.
    Since $D(f, z) = 0$,
    by the first polynomial from the right of the above description of $D(f,z)$,
    we have $z_{N-1} = 0$. By the second polynomial from the right,
    we have $z_{N-2} = 0$ because of $e-2 \geq 1$. Similarly, we have $z_1 = \dots = z_{N-1} = 0$.
    Since $f(z) = 0$, we have $z_0 = 0$.
    Therefore $z = (0: \dots: 0: 1)$.

    Suppose that $e > p$ and $p \nmid e$.
    Then we set $X_0 \subset \PN$ to be the hypersurface defined by
    \[
    f = z_0^pz_1^{e-p} + z_2^e + \dots + z_N^e.
    \]
    Then $X_0$ is strange for $v$. In addition, $D(f, z)$ is equal to
    \[
    \begin{bmatrix}
      (e-p) z_0^pz_1^{e-p-1} & e z_2^{e-1} & \dots & e z_N^{e-1}
    \end{bmatrix}.
    \]
    Thus $\Sing (X_0) = \set{(1:0:0:\dots:0), (0:1:0:\dots:0)}$ if $e > p+1$,
    and $\Sing (X_0) = \set{(0:1:0:\dots:0)}$ if $e = p+1$.
  \end{inparaenum}
\end{rem}

\begin{proof}[Proof of \autoref{thm:exist-barX}]
  We have already shown the assertion in the case where $\HH$ is the space of hypersurfaces, as above.
  Thus we assume that $r \geq 2$. Without loss of generality, we can assume that $e_1 \geq p$.
  Let us take $Z \subset \PP^{N-1}$ to be an $(N-r)$-dimensional complete intersection variety defined by
  $r-1$ homogeneous polynomials
  $f^2, \dots, f^r \in K[z_1, \dots, z_N]$ of degrees $e^2, \dots, e^r$,
  such that $0 < \#(\Sing (Z)) < \infty$ (for example, a cone over a smooth variety).
  By Bertini's theorem,
  we can choose a hypersurface $Y^1 \subset \PP^{N-1}$ defined by a homogeneous polynomial
  $f^1 \in K[z_1, \dots, z_N]$ of degree $e^1$
  such that the intersection
  $Y := Y^1 \cap Z \subset \PP^{N-1}$ is smooth,
  where $Y^1 \cap \Sing (Z) = \emptyset$.
  Note that Bertini's theorem for subvarieties of $\PN$ in any $p \geq 0$
  is deduced from \cite[II, Theorem~8.18]{hartshorne} by using $d$-uple embeddings.

  Let $\beta = (\beta_1: \dots: \beta_N) \in \Sing (Z)$.
  Then we have
  \[
  f^1(\beta) \neq 0
  \textand
  \rk D((f^2, \dots, f^r), \beta; \PP^{N-1}) < r-1.
  \]
  Here, without loss of generality, we can assume that $\beta_1 \neq 0$.
  We regard $Y$ as a subvariety of $\PN$ contained in the hyperplane $(z_0 = 0) \simeq \PP^{N-1}$.

  Now, let us define a complete intersection variety
  \[
  X_0 := (g^1= f^2 =  \dots = f^r = 0) \subset \PN,
  \]
  where
  \[
  g^1 := f^1 - \frac{f^1(\beta)}{\beta_1^{e^1-p}} \cdot z_0^{p} z_1^{e^1-p}
  \in K[z_0,z_1, \dots, z_N].
  \]
  Then $X_0$ is strange for $v$ because of $g^1_{z_0} = 0$.
  In addition,
  we have $\dim (\Sing (X_0)) < 1$, as follows:
  By definition of $g^1$, the intersection $X_0 \cap (z_0 = 0)$ is equal to the smooth variety $Y$.
  Then $X_0$ is smooth at any $x \in Y$.
  (This deduced from \cite[Theorem.~14.2]{Ma}, as follows:
  Let $\ell$ be a homogeneous linear polynomial satisfying $\ell(x) \neq 0$.
  Then, since $\sO_{Y, x} = \sO_{X_0,x}/ (z_0/\ell)$ is a regular local ring,
  the $r+1$ functions $z_0/\ell, g^1/\ell, f^2/\ell, \dots, f^r/\ell$ give a subset of
  a regular system of parameters of $\sO_{\PN,x}$.
  Hence $\sO_{X_0,x} = \sO_{\PN,x}/(g^1/\ell, f^2/\ell, \dots, f^r/\ell)$ is also regular.)
  Thus $\Sing (X_0)$ does not intersect with $(z_0 = 0)$,
  which implies that $\Sing (X_0)$ must be of dimension $< 1$.

  Next, we set
  $\alpha := (1: \beta_1: \dots: \beta_N) \in \PN$,
  where $\alpha \neq v$. 
  We have $\alpha \in \Sing (X_0)$, as follows:
  Since $\beta \in Z$, we have $f^2(\alpha) = \dots = f^r(\alpha) = 0$.
  By definition of $g^1$, it follows that $g^1(\alpha) = 0$. Hence $\alpha \in X_0$.
  Since $\rk D((f^2, \dots, f^r), \beta; \PP^{N-1}) < r-1$,
  we have
  $\rk D((g^1, f^2, \dots, f^r), \alpha; \PN) < r$.
  Hence $\alpha \in \Sing (X_0)$.
\end{proof}

\subsection{Irreducible components of the incidence scheme}
\label{sec:dimens-incid-vari}

Let us consider
\[
I = \Set*{((f^k), a) \in \Hv \times \PN}{
  \begin{aligned}
    &f^1(a) = \dots = f^r(a) = 0 \\
    &\text{and } \rk D ((f^k), a)  < r
  \end{aligned}
},
\]
the incidence scheme whose general members parameterize pairs $(X, a)$ such that
$X$ is a complete intersection variety defined by $(f^k) \in \Hv$
and that $a$ is a singular point of $X$.
Let $\prH: I \rightarrow \Hv$ and $\prP: I \rightarrow \PN$ be the first and second projections.
Here, the image $\prH(I) \subset \Hv$ is the parameter space of singular complete intersections.

For a member $(f^1, \dots, f^r) \in \Hv$, we set
\begin{equation*}  D' ((f^k), a) :=
  \begin{bmatrix}
    f^1_{z_1} (a) & \cdots & f^1_{z_{N-1}} (a)
    \\
    \vdots && \vdots
    \\
    f^r_{z_1} (a) & \cdots & f^r_{z_{N-1}} (a)
  \end{bmatrix}
\end{equation*}
(i.e., we remove $f^k_{z_N}$'s from $D ((f^k), a)$ defined in \ref{eq:defn-Dfa}),
and set
\[
I' := \Set*{((f^k), a) \in \Hv \times \PN}{
  \begin{aligned}
    & f^1(a) = \dots = f^r(a) = 0
    \\
    &\text{and }  \rk D' ((f^k), a) < r
  \end{aligned}
},
\]
where we have $I \subset I'$.
We will investigate irreducible components of $I'$ and $I$.

\begin{lem}\label{thm:dimI'}
  Let $\Lambda'$ be an irreducible component of $I'$.
  If $\prP(\Lambda') = \PN$,
  then $\Lambda'$ is of dimension $\geq \dim(H^v)$.
\end{lem}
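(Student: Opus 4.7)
My plan is to compute the fiber dimension of $\prP \colon I' \to \PN$ over a dense open subset of $\PN$ and then identify $\Lambda'$ with the unique irreducible component of $I'$ dominating $\PN$.

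First, fix $U := \{a \in \PN : a_N \neq 0\}$. For each $k$, consider the linear evaluation map $\Phi_a^k \colon V^k \to K \oplus K^{N-1}$ sending $f \mapsto (f(a), f_{z_1}(a), \dots, f_{z_{N-1}}(a))$, where $V^k$ is the vector space with $\PP(V^k) = \HH_{e^k}^v$. Testing the monomials $f = z_N^{e^k}$ and $f = z_j z_N^{e^k - 1}$ for $j = 1, \dots, N - 1$---all of which lie in $V^k$ since they have no $z_0$-dependence---I would verify that $\Phi_a^k$ is surjective for every $a \in U$, since the resulting $N$ image vectors form a triangular system with diagonal entries equal to nonzero powers of $a_N$. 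Consequently the product map $\Phi_a \colon V := \prod_k V^k \to K^r \times M_{r \times (N-1)}$ is surjective for $a \in U$.

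For such $a$, the fiber $\tilde I'_a \subset V$ is the preimage of the irreducible locus $\{0\} \times \{\rk M < r\}$, which has codimension $r + (N - r) = N$ by the classical codimension formula for determinantal varieties. Since $\Phi_a$ is linear surjective, $\tilde I'_a$ is irreducible of codimension $N$ in $V$. Both defining conditions are $(K^\times)^r$-equivariant---the first is linear in each $f^k$ separately, the second is invariant under independent scaling of the rows of $D'$---so $I'_a \subset \Hv$ is irreducible of dimension $\dim \Hv - N$. It follows that $I' \cap \prP^{-1}(U) \to U$ is a surjection with irreducible equidimensional fibers over an irreducible base, hence $I' \cap \prP^{-1}(U)$ itself is irreducible of dimension $\dim \Hv$, and its closure $\Lambda_0'$ in $\Hv \times \PN$ is an irreducible component of $I'$.

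Finally, for any irreducible component $\Lambda'$ of $I'$ with $\prP(\Lambda') = \PN$, the intersection $\Lambda' \cap \prP^{-1}(U)$ is a non-empty open subset---hence dense---of the irreducible $\Lambda'$, and is contained in $I' \cap \prP^{-1}(U) \subseteq \Lambda_0'$. Taking closures forces $\Lambda' \subseteq \Lambda_0'$, and maximality of irreducible components then gives $\Lambda' = \Lambda_0'$, so $\dim \Lambda' = \dim \Hv$. The main obstacle lies in the fiber computation: explicitly checking surjectivity of $\Phi_a^k$ and tracking the $(K^\times)^r$-equivariance so that both irreducibility and the codimension $N$ descend correctly from the affine cone $V$ to $\Hv$; once these technicalities are settled, the topological identification of $\Lambda'$ with $\Lambda_0'$ is routine.
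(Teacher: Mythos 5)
Your argument is correct and rests on the same two computational pillars as the paper's proof --- surjectivity of the linear evaluation map on $\hat{\Hv_{e}}$ at a point with $a_N \neq 0$, checked on the monomials $z_jz_N^{e-1}$, and the codimension formula for the determinantal locus $M_{r-1}(r,N-1)$ --- but you push both further and finish by a different route. The paper only needs an upper bound on the codimension of the fiber: every component of $I' \cap (\Hv \times \set{a})$ has codimension $\leq N$ (codimension $N-r$ from the rank condition plus at most $r$ from the equations $f^k(a)=0$), and this is transferred to $\Lambda'$ via the generic-fiber-dimension formula $\codim(\Lambda', \Hv\times\PN) = \codim(W, \Hv \times \set{a})$ for general $a$. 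You instead enlarge the linear map to include the evaluations $f^k \mapsto f^k(a)$, prove the fiber is irreducible of codimension exactly $N$ for \emph{every} $a$ with $a_N\neq 0$, and invoke the theorem that a closed surjection with irreducible equidimensional fibers over an irreducible base has irreducible total space; this buys a strictly stronger conclusion (there is a unique component of $I'$ dominating $\PN$, of dimension exactly $\dim \Hv$), which the paper does not need. The two technicalities you flag do require the checks you anticipate, and both go through: (i) the irreducibility-of-the-total-space theorem needs the map to be closed, which holds because $\Hv$ is projective, so $I'\cap\prP^{-1}(U) \to U$ is proper; (ii) the descent from the affine cone needs $\tilde I'_a \not\subset \{f^k = 0\}$ for each $k$, which follows since $\tilde I'_a$ is irreducible of codimension $N$ while $\{f^k=0\}$ has codimension $\dim V^k \geq \binom{N+1}{2} > N$ (as $V^k$ contains all degree-$e^k$ forms in $z_1,\dots,z_N$ and $N \geq r+1 \geq 2$).
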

\begin{rem}\label{thm:Er-codim}
  Let $M_{k} = M_{k}(r, n)$ be the set of ${(x_{i,j})_{1 \leq i \leq r, 1 \leq j \leq n} \in {\A^{rn}}}$
  such that
  \[
  \rk
  \begin{bmatrix}
    x_{1,1} & \dots & x_{1,n}
    \\
    \vdots && \vdots
    \\
    x_{r,1} & \dots & x_{r,n}
  \end{bmatrix}
  \leq k.
  \]
  Then $M_{k}$ is an irreducible subvariety of codimension $(r-k)(n-k)$ in ${\A^{rn}}$
  (see \cite[p. 67, II, \textsection 2, Prop.]{ACGH}).

\end{rem}

\begin{rem}\label{thm:def-Phie}
  Let $a = (a_0: \dots: a_N) \in \PN$ be a point with $a_N \neq 0$.
  Let $\hat{\Hv_e} \subset H^0(\PN, \sO(e))$ be the affine subvariety
  corresponding to $\Hv_e \subset \HH_e$. We consider the morphism
  \[
  \Phi_e: \hat{\Hv_e} \rightarrow \A^{N-1}: f \mapsto (f_{z_1}(a), \dots, f_{z_{N-1}}(a)).
  \]
  Then $\Phi_e$ is surjective.
  This is because, for $b = (b_1, \dots, b_{N-1}) \in \A^{N-1}$,
  we have 
  \[
  \Phi_e(b_1 / a_N^{e-1} \cdot z_1z_N^{e-1} + \dots + b_{N-1} / a_N^{e-1} \cdot z_{N-1}z_N^{e-1}) = b.
  \]
\end{rem}

\begin{proof}[Proof of \autoref{thm:dimI'}]
  Let $a = (a_0: \dots: a_N) \in \PN$ be general. Then we can assume $a_N \neq 0$.
  Let us  calculate the codimension of the intersection $I' \cap (\Hv \times \set{a})$,
  which is equal to the fiber of $\prP: I' \rightarrow \PN$ at $a$.
  First, we set
  \[
  \Phi = \bigoplus_{\RNk} \Phi_{e^k}: \hat\Hv \rightarrow \A^{r(N-1)}: (f^k)
  \mapsto D'((f^k), a),
  \]
  where $\hat\Hv := \bigoplus \hat{\Hv_{e_k}}\subset \bigoplus H^0(\PN, \sO(e^k))$ is the affine subvariety
  corresponding to $\Hv \subset \HH$.
  From \autoref{thm:def-Phie}, $\Phi$ is surjective.
  Moreover, $\Phi$ is a smooth morphism, since it is regarded as a linear map of vector spaces.
  Hence
  $\Phi^{-1}(M_{r-1}) \subset \hat\Hv$ is of codimension $N-r$,
  where $M_{r-1} = M_{r-1}(r, N-1) \subset \A^{r(N-1)}$
  is the subvariety defined in \autoref{thm:Er-codim}.
  Then each irreducible component of
  \begin{equation}\label{eq:fib-at-a}
    \Phi^{-1}(M_{r-1}) \cap F_a \subset \hat\Hv
  \end{equation}
  is of codimension $\leq N$,
  where $F_a := \set*{(f^k) \in \hat\Hv}{f^1(a) = \dots = f^r(a) = 0}$.
  Now, the projective variety in $\Hv$ corresponding to the affine variety \ref{eq:fib-at-a}
  can be identified with 
  \[
  I' \cap (\Hv \times \set{a}) \subset (\Hv \times \set{a}).
  \]
  Hence each irreducible component of $I' \cap (\Hv \times \set{a})$ is of codimension $\leq N$.

  Since $\Lambda' \rightarrow \PN$ is surjective and since $a \in \PN$ is general,
  there exists an irreducible component $W$ of
  $I' \cap (\Hv \times \set{a})$
  such that
  $W$ coincides with an irreducible component of
  $\Lambda' \cap (\Hv \times \set{a})$.
  The reason is as follows.
  Let $U = \Lambda' \setminus \bigcup_{i=1}^m V_i$,
  where $V_1, \dots, V_m$ are the irreducible components of $I'$ with $V_i \neq \Lambda'$.
  Since $U \rightarrow \PN$ is dominant,
  we can take an irreducible component $W$ of $I' \cap (\Hv \times \set{a})$
  such that $W \cap U \neq \emptyset$. Then $W$ is contained in the fiber
  $\Lambda' \cap (\Hv \times \set{a})$,
  and is indeed an irreducible component of this fiber.


  In addition, since $a$ is general,
  we have
  \[
  \codim(\Lambda', \Hv \times \PN) = \codim(W, \Hv \times \set{a})
  \]
  (see \cite[II, Ex. 3.22]{hartshorne}).
  Hence $\codim(\Lambda', \Hv \times \PN) \leq N$, which implies the assertion.
\end{proof}

\begin{lem}\label{thm:notin-zN0}
  Let $\Lambda' \subset I'$ be an irreducible subset.
  If $\prP(\Lambda') \not\subset (z_N = 0)$,
  then $\Lambda' \subset I$.
\end{lem}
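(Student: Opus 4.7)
The plan is to exploit Euler's relation together with the defining condition $f^k_{z_0}=0$ characterizing members of $\Hv$ (see \ref{eq:I-fz0-zero}), which will force the last column of $D((f^k),a)$ to lie in the span of the columns of $D'((f^k),a)$ whenever $a_N\neq 0$. Since $I\subset I'$ and the conditions cutting out $I$ inside $\Hv\times\PN$ are closed, it will then suffice to verify $\Lambda'\subset I$ on a dense open subset of $\Lambda'$.

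Concretely, I would first take a point $((f^k),a)\in \Lambda'$ with $a=(a_0:\dots:a_N)$ and $a_N\neq 0$; by the hypothesis $\prP(\Lambda')\not\subset(z_N=0)$, such points form a dense open subset $U\subset\Lambda'$. Euler's identity applied to the homogeneous polynomial $f^k$ of degree $e^k$ gives
\[
\sum_{i=0}^{N} z_i\,f^k_{z_i}=e^k\cdot f^k,
\]
valid in every characteristic. Combined with the defining equation $f^k_{z_0}=0$ of $\Hv$ and the vanishing $f^k(a)=0$ coming from $((f^k),a)\in I'$, evaluation at $a$ yields
\[
\sum_{i=1}^{N} a_i\,f^k_{z_i}(a)=0 \qquad \text{for each } 1\leq k\leq r.
\]
Since $a_N\neq 0$, this expresses the last column of $D((f^k),a)$ as a linear combination of the first $N-1$ columns, i.e.\ the columns of $D'((f^k),a)$. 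Consequently $\rk D((f^k),a)=\rk D'((f^k),a)<r$, and together with $f^1(a)=\dots=f^r(a)=0$ this shows $((f^k),a)\in I$.

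Thus $U\subset I$. Because the two conditions defining $I$ inside $\Hv\times\PN$ (vanishing of $f^1,\dots,f^r$ at $a$ and the rank drop of $D((f^k),a)$) are both closed, $I$ is a closed subscheme, hence $\Lambda'=\overline{U}\subset I$, proving the lemma.

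There is essentially no obstacle here beyond recognizing the Euler identity trick and the fact that $\Hv$ was defined precisely so that $f^k_{z_0}$ drops out of the relation. The only mild point to keep in mind is that $e^k$ may be divisible by $p$, but this only strengthens the Euler relation (one gets $0$ on the right-hand side regardless of $f^k(a)$), so the conclusion is unaffected.
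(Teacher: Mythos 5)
Your proof is correct and is essentially the paper's own argument: apply Euler's formula together with $f^k_{z_0}=0$ and $f^k(a)=0$ at a point with $a_N\neq 0$ to write the last column of $D((f^k),a)$ as a combination of the columns of $D'((f^k),a)$, so the rank drop persists, and then conclude by closedness of $I$ and irreducibility of $\Lambda'$. The paper phrases this as checking a general member of $\Lambda'$; your explicit dense-open-subset/closure step is the same reasoning spelled out.
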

\begin{proof}
  Let $((f^k), a) \in \Lambda'$ be a general member.
  Since $\prP(\Lambda') \not\subset (z_N = 0)$,
  $a = (a_0: \dots: a_N)$ satisfies
  $a_N \neq 0$. 
  Then, since $f^k_{z_0} = 0$, it follows from Euler's formula $\sum_{j=0}^{N} a_j f^k_{z_j} (a) = e^k f^k(a) = 0$ that
  \[
  f^k_{z_N} (a) = 
  - (a_1/a_N \cdot f^k_{z_1} (a) + \cdots + a_{N-1}/a_N \cdot f^k_{z_{N-1}} (a))
  \]
  holds for $\RNk$.
  Thus the last column vector of the matrix $D((f^k), a)$
  defined in \ref{eq:defn-Dfa} can be written as
  a linear combination of the other column vectors.
  Then, since $D'((f^k), a)$ is of rank $< r$, so is $D((f^k), a)$.
  It follows that $((f^k), a) \in I$.
\end{proof}

\begin{thm}\label{thm:sing-str2}
  Let $v \in \PN$ be a point, and let $e^1, \dots, e^r$ be $r$ integers greater than $1$.
  Let $\Hv = \Hv_{e^1} \times \dots \times \Hv_{e^r}$ be
  the parameter space defined in
  \ref{eq:defHv}.
  Assume that the space $\HH$ defined in \ref{eq:defHH} is \emph{not} equal to
  the parameter space of quadric hypersurfaces of odd dimensions
  in $p=2$.
  Then $\prH(I) = \Hv$, which means that
  every complete intersection variety defined by a member of $\Hv$ is singular.
\end{thm}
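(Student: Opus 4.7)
The plan is to exhibit an irreducible component $\Lambda$ of $I$ whose image under $\prH$ exhausts $\Hv$. Two ingredients go in: \autoref{thm:exist-barX}, which produces a strange complete intersection $X_0 \subset \PN$ defined by some $(f^k_0) \in \Hv$ and having an \emph{isolated} singular point $\alpha \neq v$, and the action of the stabilizer $G_v \subset \PGL_{N+1}$ of $v$, which acts diagonally on $\Hv \times \PN$. One first checks that $G_v$ preserves $\Hv$: with $v = (1{:}0{:}\dots{:}0)$, every element of $G_v$ has block form $\left(\begin{smallmatrix} 1 & * \\ 0 & A \end{smallmatrix}\right)$, and a short chain-rule check shows that the condition $f_{z_0} = 0$ is preserved; the incidence relation cutting out $I$ is tautologically $G_v$-invariant. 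Take $O$ to be the $G_v$-orbit of $((f^k_0), \alpha) \in I$ and let $\Lambda$ be an irreducible component of $I$ containing $\overline{O}$.

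The next step verifies $\prP(\Lambda) = \PN$: since $G_v$ acts transitively on $\PN \setminus \{v\}$, already $\prP(\overline{O}) = \PN$. To extract a lower bound on $\dim \Lambda$ from \autoref{thm:dimI'}, I would view $\Lambda$ as an irreducible closed subset of $I' \supset I$ and enlarge it to an irreducible component $\Lambda'$ of $I'$. Then $\prP(\Lambda') \supset \PN \not\subset (z_N = 0)$, so \autoref{thm:notin-zN0} forces $\Lambda' \subset I$; by maximality of $\Lambda$ inside $I$, we obtain $\Lambda = \Lambda'$. Hence $\Lambda$ is itself an irreducible component of $I'$ with $\prP(\Lambda) = \PN$, and \autoref{thm:dimI'} yields $\dim \Lambda \geq \dim \Hv$.

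To conclude, I would bound the generic fiber of $\prH|_{\Lambda}$. The fiber over $(f^k_0)$ is contained in $\{(f^k_0)\} \times \Sing X_0$, which is finite by construction of $X_0$; upper semicontinuity of fiber dimension then forces the generic fiber of $\prH|_{\Lambda} : \Lambda \to \prH(\Lambda)$ to be $0$-dimensional, so $\dim \prH(\Lambda) = \dim \Lambda \geq \dim \Hv$. Being closed and irreducible inside the irreducible variety $\Hv$, $\prH(\Lambda)$ must equal $\Hv$, giving $\prH(I) \supset \prH(\Lambda) = \Hv$ and hence the asserted equality.

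The step I expect to be the most delicate is the passage between $I$ and $I'$: the dimension estimate of \autoref{thm:dimI'} lives on components of $I'$, so one has to show that the group-orbit component $\Lambda$ of $I$ is already a component of the larger scheme $I'$. This hinges on $v \notin (z_N = 0)$, which is what lets \autoref{thm:notin-zN0} re-absorb the enlarged $\Lambda'$ back into $I$. The remaining pieces — the $G_v$-equivariance, the transitivity of $G_v$ on $\PN \setminus \{v\}$, and the fiber-dimension count — are comparatively routine, but the whole argument collapses without the isolated singular point $\alpha \neq v$ supplied by \autoref{thm:exist-barX}, which is precisely why the hypothesis excluding odd-dimensional quadrics in $p=2$ has to be in place.
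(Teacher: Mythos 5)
Your main construction is essentially the paper's own proof: take the pair $((f^k_0),\alpha)$ from \autoref{thm:exist-barX}, form its orbit under $\PGL(\PN;v)$, let $\Lambda$ be a component of $I$ containing it so that $\prP(\Lambda)=\PN$, pass to a component $\Lambda'$ of $I'$, use \autoref{thm:notin-zN0} to re-absorb $\Lambda'$ into $I$ and conclude $\Lambda=\Lambda'$, get $\dim\Lambda\geq\dim\Hv$ from \autoref{thm:dimI'}, and finally use the isolatedness (indeed finiteness) of $\Sing X_0$ to see that the generic fiber of $\prH|_\Lambda$ is $0$-dimensional, so $\prH(\Lambda)=\Hv$. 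Your reordering of the two lemmas and your use of semicontinuity of fiber dimension in place of the paper's lower bound on fiber components are harmless variants.

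There is, however, a genuine gap: you invoke \autoref{thm:exist-barX} unconditionally, but that proposition carries the hypothesis ``$p>0$ and $e^k\geq p$ for some $k$,'' while the theorem you are proving only excludes the space of odd-dimensional quadrics in $p=2$ and in particular allows $p=0$ or $e^k<p$ for all $k$. In those cases your key ingredient does not merely lack a proof, it is false: if $p=0$, or if $e^k<p$ for all $k$, then $f_{z_0}=0$ forces every defining polynomial of a member of $\Hv$ to lie in $K[z_1,\dots,z_N]$, so every member is a cone with vertex $v$ (\autoref{thm:cone-strange}, \autoref{thm:e_k--p}); the locus cut out by the $f^k$ and the rank condition on $D((f^k),a)$ is then itself a cone with vertex $v$, so no isolated singular point $\alpha\neq v$ can exist, and the orbit construction cannot start. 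This case must therefore be handled separately, as the paper does in the first lines of its proof; for instance, since each $f^k\in K[z_1,\dots,z_N]$ has degree $e^k\geq 2$, the point $v=(1{:}0{:}\dots{:}0)$ satisfies $f^k(v)=0$ and $D((f^k),v)=0$, so $((f^k),v)\in I$ for every member of $\Hv$, which already gives $\prH(I)=\Hv$ there. With that case added, your argument is complete and coincides with the paper's.
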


\begin{proof}
  If $p=0$ or $e^k < p$ for any $k$, then it follows from
  \autoref{thm:cone-strange} and \autoref{thm:e_k--p}
  that every $X$ defined by a member of $\Hv$ is a cone with vertex $v$,
  in particular, is singular. 

  Now we assume that $p > 0$ and $e^k \geq p$ for some $k$.
  From \autoref{thm:exist-barX}, we can take $((f^k), \alpha) \in \Hv \times \PN$
  such that $\alpha \neq v$ is an isolated singular point of
  the complete intersection variety $X_0 \subset \PN$ defined by $(f^k)$.
  
  We denote by $\PGL(\PN; v) \subset \PGL(\PN)$
  the group of automorphisms $\sigma$ of $\PN$ such that $\sigma(v) = v$.
  Let us consider
  the subset of $I$ parameterizing pairs $(\sigma(X_0), \sigma(\alpha))$ with $\sigma \in \PGL(\PN; v)$,
  which is actually given by the orbit of $((f^k), \alpha)$ in $I$,
  \[
  \set*{(((\sigma^{-1})^*f^k), \sigma(\alpha)) \in I}{\sigma \in \PGL(\PN; v)},
  \]
  where $(\sigma^{-1})^*f^k(z) := f^k(\sigma^{-1}(z))$.
  Now we take an irreducible component $\Lambda$ of $I$ containing the above orbit.
  Then $\prP(\Lambda) = \PN$.
  Here we have
  \begin{equation}\label{eq:dim-pr2-V}
    \dim (\Lambda) = \dim (\prH(\Lambda)).
  \end{equation}
  The reason is as follows:
  The fiber of $\prH: I \rightarrow \Hv$ at $(f^k)$
  is equal to $\Sing (X_0)$.
  Since $\alpha$ is isolated, the set $\set{\alpha}$
  is an irreducible component of $\Sing(X_0)$.
  Since each irreducible component of a fiber of
  $\Lambda \rightarrow \prH(\Lambda)$ must be of dimension
  $\geq \dim(\Lambda) - \dim(\prH(\Lambda))$, the equality $\dim(\Lambda) = \dim(\prH(\Lambda))$ holds.

  Let $\Lambda' \subset I'$ be an irreducible component of $I'$ such that
  $\Lambda \subset \Lambda'$.
  From \autoref{thm:dimI'},
  $\dim (\Lambda') \geq \dim (\Hv)$.
  From \autoref{thm:notin-zN0},  we have $\Lambda = \Lambda'$.
  From \ref{eq:dim-pr2-V}, we have $\dim(\prH(\Lambda)) \geq \dim(\Hv)$,
  and hence $\prH(\Lambda) = \Hv$.
\end{proof}

\begin{proof}[Proof of \autoref{thm:sing-str}]
  The equivalence ``(a) $ \Leftrightarrow $ (c)'' follows immediately from \autoref{thm:ci-cohom-cond} and \autoref{thm:rem-dim-1}.
  The implication ``(b) $ \Rightarrow $ (a)'' follows as in \autoref{thm:quad}.
  Now we show the implication ``(a) $ \Rightarrow $ (b)''.
  Let $X \subset \PN$ be a smooth $(e^1, \dots, e^r)$-complete intersection variety,
  and assume that $X$ is strange for a point $v \in \PN$.
  It is sufficient to consider the case where $X$ is non-degenerate.
  Then $e^k > 1$ for any $k$.
  As in \autoref{thm:defpolyX-in-H}, it follows from \autoref{thm:fk_z0=0} that
  $X$ is defined by a member of $(f^k) \in \Hv$.
  Since $X$ is smooth, it follows from \autoref{thm:sing-str2}
  that $X$ must be a quadric of odd dimension in $p=2$.
\end{proof}

\vspace{1em}
\paragraph{Acknowledgments}

The author would like to thank Hajime~Kaji for helpful discussions and suggestions.
The author also would like to thank Satoru~Fukasawa and Yasunari~Nagai
for their valuable comments.
The author was partially supported by JSPS KAKENHI Grant Number 25800030.

\vspace{1ex}
\end{document}